\theoremstyle{plain}
\newtheorem{thm}{Theorem}[section]
\newtheorem{lem}[thm]{Lemma}
\newtheorem{prop}[thm]{Proposition}
\newtheorem{cor}[thm]{Corollary}
\theoremstyle{definition}
\newtheorem{defi}[thm]{Definition}
\newtheorem{eg}[thm]{Example}
\theoremstyle{remark}
\newtheorem{rmk}[thm]{Remark}
\newtheorem*{ack}{Acknowledgments}
\numberwithin{equation}{section}
\def\N{{\mathbb N}}
\def\Z{{\mathbb Z}}
\def\Q{{\mathbb Q}}
\def\R{{\mathbb R}}
\def\C{{\mathbb C}}
\def\cI{\mathcal{I}}
\def\cJ{\mathcal{J}}
\def\cO{\mathcal{O}}
\def\I{\mathcal{I}}
\def\J{\mathcal{J}}
\def\O{\mathcal{O}}
\def\fa{\mathfrak{a}}
\def\a{\alpha}
\def\e{\eta}
\def\ep{\varepsilon}
\def\n{\nu}
\def\m{\mu}
\def\p{\pi}
\def\r{\rho}
\def\s{\sigma}
\def\D{\Delta}
\def\.{\cdot}
\def\^{\widehat}
\def\~{\widetilde}
\def\o{\circ}
\def\inj{\hookrightarrow}
\def\({\left(}
\def\){\right)}
\renewcommand{\and}{ \ \ \text{ and } \ \ }
\def\an{{\rm an}}
\def\reg{{\rm reg}}
\def\red{{\rm red}}
\def\num{{\rm num}}
\def\loc{{\rm loc}}
\DeclareMathOperator{\Pic} {Pic}
\DeclareMathOperator{\Sing} {Sing}
\DeclareMathOperator{\ord} {ord}
\DeclareMathOperator{\NS} {NS}
\DeclareMathOperator{\Cl} {Cl}
\DeclareMathOperator{\Cll} {Cl_\loc}
\DeclareMathOperator{\Cln} {Cl_\num}
\DeclareMathOperator{\NC} {NumCar}
\DeclareMathOperator{\Car} {Car}
\DeclareMathOperator{\Env} {Env}
\DeclareMathOperator{\Exc} {Exc}
\DeclareMathOperator{\Val} {Val}
\DeclareMathOperator{\DVal} {DivVal}
\newcommand{\ie}{{\rm i.e.\ }}
\begin{document}

%{\hfill\small\tt\jobname\vspace{3em}}

%section{Title and Abstract}

\title{Valuation spaces and multiplier ideals on singular varieties}

\author{S\'ebastien Boucksom}
\address{CNRS - Institut de Math\'ematiques de Jussieu, 4 place Jussieu, 75252 Paris Cedex, France}
\email{boucksom@math.jussieu.fr}

\author{Tommaso de Fernex}
\address{Department of Mathematics, University of Utah, 
Salt Lake City, UT 48112-0090, USA}
\email{defernex@math.utah.edu}

\author{Charles Favre}
\address{CNRS - Centre de Math\'ematiques Laurent Schwartz, 
\'Ecole Polytechnique, 
91128 Palaiseau Cedex, France}
\email{favre@math.polytechnique.fr}

\author{Stefano Urbinati}
\address{Universit\`a degli Studi di Padova, Via
Trieste 63, 35121 Padova, Italy}
\email{urbinati.st@gmail.com}

\thanks    {The first author is partially supported by ANR projects MACK and POSITIVE. Second author was partially supported by NSF CAREER Grant
            DMS-0847059 and the Simons Foundation.
The third author is supported by the ERC-starting grant project "Nonarcomp" no.307856}

\thanks		{Compiled on \today. Filename \small\tt\jobname}

%\subjclass [2010]{}
%\keywords  {}

\begin{abstract} We generalize to all normal complex algebraic varieties the valuative characterization of multiplier ideals due to Boucksom-Favre-Jonsson in the smooth case. To that end, we extend the log discrepancy function to the space of all real valuations, and prove that it satisfies an adequate properness property, building upon previous work by Jonsson-Musta\c t\u a. We next give an alternative definition of the concept of numerically Cartier divisors previously introduced by the first three authors, and prove that numerically $\Q$-Cartier divisors coincide with $\Q$-Cartier divisors for rational singularities. These ideas naturally lead to the notion of numerically $\Q$-Gorenstein varieties, for which our valuative characterization of multiplier ideals takes a particularly simple form.
\end{abstract}

\maketitle

\tableofcontents

\section{Introduction}

Multiplier ideal sheaves are a fundamental tool both in complex algebraic and complex analytic geometry. They provide a way to approximate a 'singularity data', which can take the form of a (coherent) ideal sheaf, a graded sequence of ideal sheaves, a plurisubharmonic function, a nef $b$-divisor, etc..., by a coherent ideal sheaf satisfying a powerful cohomology vanishing theorem. For the sake of simplicity, we will focus on the case of ideals and graded sequences of ideals in the present paper. 

On a smooth (complex) algebraic variety $X$, the very definition of the multiplier ideal sheaf $\cJ(X,\fa^c)$ of an ideal sheaf $\fa\subset\cO_X$ with exponent $c>0$ is valuative in nature: a germ $f\in\cO_X$ belongs to $\cJ(X,\fa^c)$ iff it satisfies
$$
\n(f)>c\n(\fa)-A_X(\n)
$$
for all divisorial valuations $\n$, \ie all valuations of the form $\n=\ord_E$ (up to a multiplicative constant) with $E$ a prime divisor on a birational model $X'$, proper over $X$. Further, it is enough to test these conditions with $X'$ a fixed log resolution of $\fa$ (which shows that $\cJ(X,\fa^c)$ is coherent, as the direct image of a certain coherent fractional ideal sheaf on $X'$). Here we have set as usual $\n(\fa):=\min_{f\in\fa_x}\n(f)$ with $x=c_X(\n)$ the center of $\n$ in $X$, and
$$
A_X(\n):=1+\ord_E\left(K_{X'/X}\right)
$$
is the \emph{log discrepancy} (with respect to $X$) of the divisorial valuation $\n$. 

The multiplier ideal sheaf $\cJ(X,\fa_\bullet^c)$ of a graded sequence of ideal sheaves $\fa_\bullet=(\fa_m)_{m\in\N}$ is defined as the stationary value of $\cJ(X,\fa_m^{c/m})$ for $m$ large and divisible, but a direct valuative characterization was provided in \cite{FJ05b} in the $2$-dimensional case, in \cite{BFJ08} for all non-singular varieties, and in \cite{JM10,JM12} for the general case of regular excellent noetherian $\Q$-schemes. More specifically, for each divisorial valuation $\n$, subadditivity of $m\mapsto\nu(\fa_m)$ allows to define
$$
\nu(\fa_\bullet):=\lim_{m\to\infty}m^{-1}\nu(\fa_m)=\inf_{m\ge 1}m^{-1}\n(\fa_m)
$$
in $[0,+\infty)$. By \cite{BFJ08}, a germ $f\in\cO_X$ belongs to $\cJ(X,\fa_\bullet^c)$ iff there exists $0<\ep\ll 1$ such that
$$
\n(f)\ge(c+\ep)\n(\fa_\bullet)-A_X(\n)
$$
for all divisorial valuations $\n$.\footnote{In statements of this kind, if $f$ is a germ at $x\in X$, it is implicitely understood that we are only considering those $\n$ such that $x\in\overline{\{c_X(\n)\}}$, so that we can make sense of $\nu(f)$.} In other words, the latter condition is shown to imply the existence of $m\gg 1$ such that $\n(f)>c m^{-1}\n(\fa_m)-A_X(\n)$ for all divisorial valuations $\n$. 

\medskip

The definition of multiplier ideals was extended to the case of an arbitrary normal algebraic variety $X$ in \cite{dFH09}. If $\D$ is an effective $\Q$-Weil divisor on $X$ such that $K_X+\D$ is $\Q$-Cartier (\ie an \emph{effective $\Q$-boundary} in MMP terminology), the log discrepancy function $A_{(X,\D)}$ is a by now classical object (see for instance \cite{Kol97}). It allows to define the multiplier ideal sheaf $\cJ((X,\D);\fa^c)$ just as before for an ideal sheaf $\fa\subset\cO_X$, and then $\cJ\left((X,\D);\fa_\bullet^c\right)$ for a graded sequence of ideals $\fa_\bullet$ as the largest element in the family $\cJ((X,\D);\fa_m^{c/m})$. It is proven in \cite{dFH09} that there is a unique element in the family of ideals $\J((X,\D);\fa^c)$ with $\D$ ranging over all effective $\Q$-boundaries, which coincides with the multiplier ideal $\J(X,\fa^c)$ as defined in \cite{dFH09}. 

 Note in particular that $\cJ(X,\cO_X)=\cO_X$ iff there exists an effective $\Q$-boundary $\D$ such that the pair $(X,\D)$ is klt - which simply means that $X$ itself is log terminal when $X$ is already $\Q$-Gorenstein (\ie when $K_X$ is $\Q$-Cartier). 

In order to give a direct valuative description of these generalized multiplier ideals, one first needs to provide an adequate notion of log discrepancy for a divisorial valuation. As in \cite{dFH09}, this is done by setting for $\nu=\ord_E$ with $E$ a prime divisor on a birational model $X'$ proper over $X$
$$
A_X(\nu):=1+\ord_E(K_{X'})-\lim_{m\to\infty}m^{-1}\ord_E\cO_X\left(-mK_X\right),
$$
where $K_X$ is now an actual canonical Weil divisor on $X$ (as opposed to a linear equivalence class), $K_{X'}$ is the corresponding canonical Weil divisor on $X'$, and $\cO_X\left(-mK_X\right)$ is viewed as a fractional ideal sheaf on $X$. The definition is easily seen to be independent of the choices made. 

Our first main result is as follows. 
\begin{thm}
\label{t:1}
Let $\fa_\bullet$ be a graded sequence of ideal sheaves on a normal algebraic variety, and pick $c>0$. For every closed subscheme $N\subset X$ containing both $\Sing(X)$ and the zero locus of $\fa_1$ (and hence of $\fa_m$ for all $m$) and every $0<\ep\ll 1$, we have
$$
\J(X,\fa_\bullet^c)= 
\left\{f\in \cO_X \mid \n(f)\ge\nu(\fa_\bullet)-A_X(\n)+\ep\n(\I_N)\text{ for all divisorial valuations }\n\right\},
$$
with $\cI_N\subset\cO_X$ denoting the ideal sheaf defining $N$. 
\end{thm}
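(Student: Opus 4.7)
The plan is to follow the template of the smooth-variety proof from \cite{BFJ08}, replacing the classical log discrepancy by its extension $A_X$ to all real valuations developed earlier in this paper, and using the properness of $A_X$ as the key compactness input. To bridge the ``abstract'' multiplier ideal $\J(X,\fa^c)$ of \cite{dFH09} and the classical multiplier ideals $\J((X,\Delta);\fa^c)$ attached to effective $\Q$-boundaries, I will exploit that the former is the unique maximum of the latter family. After reducing from a graded sequence to a single ideal via $\J(X,\fa_\bullet^c)=\J(X,\fa_m^{c/m})$ for $m\gg 1$ divisible, the argument hinges on properly handling the Cartier defect of $K_X$, which the correction $\ep\,\nu(\cI_N)$ is designed to absorb.

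For the inclusion $\subseteq$, suppose $f\in\J(X,\fa_\bullet^c)$. By \cite{dFH09} there exist $m$ large and an effective $\Q$-boundary $\Delta$ with $f\in\J((X,\Delta);\fa_m^{c/m})$. The smooth-case valuative characterization on a log resolution of $(X,\Delta)$ and $\fa_m$ yields $\nu(f)>(c/m)\nu(\fa_m)-A_{(X,\Delta)}(\nu)\ge c\nu(\fa_\bullet)-A_{(X,\Delta)}(\nu)$ for every divisorial $\nu$. Since $N$ contains $\Sing(X)$, and the Cartier defect of $K_X+\Delta$ is supported on $\Sing(X)$, the nonnegative discrepancy $A_X(\nu)-A_{(X,\Delta)}(\nu)$ is dominated by a multiple of $\nu(\cI_N)$; choosing $\ep$ small enough produces the claimed inequality.

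For the reverse inclusion, assume $f$ satisfies the valuative condition. First extend the inequality from divisorial valuations to all of $\Val_X$ using density together with lower semicontinuity of $\nu\mapsto\nu(f)$, $\nu\mapsto\nu(\fa_\bullet)$, $\nu\mapsto\nu(\cI_N)$ and of $A_X$. Rewriting it as $A_X(\nu)+\ep\,\nu(\cI_N)\ge c\nu(\fa_\bullet)-\nu(f)$ and invoking the properness of $A_X$ (proved earlier in the paper, building on \cite{JM10, JM12}), the sublevel sets $\{\nu:A_X(\nu)+\ep\,\nu(\cI_N)\le M\}$ are compact in $\Val_X$, so the analysis reduces to a compact subset. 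Then, following the quasi-monomial approximation scheme of \cite{BFJ08, JM12}, one produces a large $m$ together with an effective $\Q$-boundary $\Delta$ supported on $N$ such that the single classical inequality $\nu(f)\ge(c/m)\nu(\fa_m)-A_{(X,\Delta)}(\nu)$ holds uniformly over all divisorial $\nu$, giving $f\in\J((X,\Delta);\fa_m^{c/m})\subseteq\J(X,\fa_\bullet^c)$.

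The main obstacle is constructing the boundary $\Delta$ in the last step so that the exchange between $A_X$ and $A_{(X,\Delta)}$ is uniformly controlled by $\ep\,\nu(\cI_N)$ on the compact set produced by properness. This is the singular-variety analogue of the $\ep$-slack used in the smooth proof, and it is here that both the extension of $A_X$ to non-divisorial valuations and its properness are indispensable: without them one cannot upgrade the pointwise $\ep$-slack to a single uniform containment encoded by a $\Q$-boundary. Once $\Delta$ is in place, the passage back to the classical theory proceeds as in \cite{BFJ08}.
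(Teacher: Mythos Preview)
The reverse inclusion has two real gaps. First, your extension from $\DVal_X$ to $\Val_X$ via ``density together with lower semicontinuity'' does not work: $\nu\mapsto\nu(\fa_\bullet)$ is \emph{upper} semicontinuous (Proposition~\ref{p:log a-bullet}), and in any event the combined function $\phi(\nu)=\nu(f)-c\,\nu(\fa_\bullet)+A_X(\nu)-\ep\,\nu(\cI_N)$ is only lower semicontinuous; an lsc function that is $\ge 0$ on a dense set need not be $\ge 0$ everywhere (take $\phi=-1$ at a single point and $0$ elsewhere). The paper instead uses the much finer facts that $\phi$ is \emph{continuous} on each dual complex $\D_\pi^N$ and satisfies $\phi=\lim_\pi\phi\circ r_\pi^N$ on $\Val_X^N$ (Theorem~\ref{thm:extend}(i),(ii)). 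Second, and more importantly, the step you yourself flag as the main obstacle---constructing a boundary $\Delta$ with uniform control of $A_{(X,\Delta)}$ by $A_X$ on a compact set---is not carried out, and the smooth-case approximation scheme offers no recipe for producing one. The paper sidesteps boundaries entirely: it works with the $m$-limiting log discrepancies $A_X^{(m)}$ and the functions $\phi_m(\nu)=\nu(f)-cm^{-1}\nu(\fa_m)+A_X^{(m)}(\nu)$, which increase to $\phi$, are lsc, and have compact sublevel sets on $\Val_X^N$ by Theorem~\ref{thm:extend}(iii). A Dini-type lemma (Lemma~\ref{lem:dini}) then converts $\inf\phi>0$ into $\inf\phi_m>0$ for some large divisible $m$, which is precisely $f\in\cJ_m(X,\fa_m^{c/m})\subset\cJ(X,\fa_\bullet^c)$. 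This Dini step is the key missing idea; without it, ``constructing $\Delta$'' is tantamount to assuming the theorem.

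Your forward inclusion is also incomplete: from $\nu(f)>c\,\nu(\fa_\bullet)-A_{(X,\Delta)}(\nu)$ together with $A_{(X,\Delta)}\le A_X$ you only obtain $\nu(f)>c\,\nu(\fa_\bullet)-A_X(\nu)$, and the bound $0\le A_X-A_{(X,\Delta)}\le C\,\nu(\cI_N)$ points the wrong way to manufacture the extra term $+\ep\,\nu(\cI_N)$. The paper gets the uniform $\ep$ not from any comparison with a fixed boundary but from compactness: since $\phi$ is lsc on $\Val_X^N$ with compact sublevel sets, it attains its infimum, so $\phi>0$ on $\Val_X^N$ forces $\phi\ge\ep$ there.
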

The key point in our approach is to construct an appropriate extension of $A_X$ to the space $\Val_X$ of all \emph{real} valuations on $X$, and to prove that it satisfies an adequate properness property, building upon \cite{JM10}. Once this is done, the last ingredient is a variant of Dini's lemma. The argument will also prove:

\begin{thm}\label{t:2}
Let $\fa_\bullet$ be a graded sequence of ideal sheaves on a normal algebraic variety, and pick $c>0$. Then 
$$
\J(X,\fa_\bullet^c)=\left\{f\in\cO_X\mid\n(f)>c\n(\fa_\bullet)-A_X(\n)\text{ for all real valuations }\n\right\}. 
$$
\end{thm}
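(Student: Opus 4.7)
Both inclusions will be deduced from Theorem \ref{t:1}, combined with density of divisorial valuations in $\Val_X$, the semicontinuity properties of the relevant functions, and the properness of the extended log discrepancy $A_X$ on $\Val_X$ established earlier in the paper (building on \cite{JM10}). Throughout, $\nu \mapsto \nu(f)$ and $\nu \mapsto \nu(\cI_N)$ are continuous on $\Val_X$, $A_X$ is lower semicontinuous by construction, and $\nu \mapsto \nu(\fa_\bullet) = \inf_m m^{-1}\nu(\fa_m)$ is upper semicontinuous as an infimum of continuous functions. Fix $N$ as in Theorem \ref{t:1} and set $\Val_X^N := \{\nu \in \Val_X : \nu(\cI_N) = 1\}$.

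For the inclusion $\subseteq$, I would start from Theorem \ref{t:1}: given $f \in \J(X,\fa_\bullet^c)$, there exists $\epsilon > 0$ such that
\[
\nu(f) - c\,\nu(\fa_\bullet) + A_X(\nu) - \epsilon\,\nu(\cI_N) \ge 0
\]
for every divisorial $\nu$. The left-hand side is lower semicontinuous in $\nu$, so the closed set where it is nonnegative contains the dense subset of divisorial valuations, hence all of $\Val_X$. The strict inequality $\nu(f) > c\,\nu(\fa_\bullet) - A_X(\nu)$ follows at once when $\nu(\cI_N) > 0$. If $\nu(\cI_N) = 0$, then $\nu$ is centered outside $N$, hence in $X_{\rm sm} \setminus V(\fa_1)$: we have $\nu(\fa_m) = 0$ for all $m$, so $\nu(\fa_\bullet) = 0$, and Izumi-type lower bounds for $A_X$ at smooth centers give $A_X(\nu) > 0$ for nontrivial $\nu$, yielding $\nu(f) \ge 0 > -A_X(\nu)$.

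For the inclusion $\supseteq$, set $h(\nu) := \nu(f) + A_X(\nu) - c\,\nu(\fa_\bullet)$, a lower semicontinuous function on $\Val_X$ which is strictly positive by hypothesis. In view of Theorem \ref{t:1}, it suffices to produce $\epsilon > 0$ such that $h(\nu) \ge \epsilon\,\nu(\cI_N)$ for every divisorial $\nu$, and by positive homogeneity this reduces to proving $\inf_{\Val_X^N} h > 0$. Since $V(\fa_1) \subseteq N$, the Nullstellensatz gives an integer $k \ge 1$ with $\cI_N^k \subseteq \fa_1$, whence $\nu(\fa_\bullet) \le \nu(\fa_1) \le k$ on $\Val_X^N$. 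The key input is then the properness of $A_X$: for every $M$, the sublevel set $\Val_X^N \cap \{A_X \le M\}$ is compact, so the lower semicontinuous positive function $h$ attains a positive minimum $m_M > 0$ there; and on its complement in $\Val_X^N$ one has $h(\nu) \ge A_X(\nu) - ck > M - ck$. Choosing $M > ck$ and $\epsilon := \min(m_M,\, M - ck) > 0$ yields $h \ge \epsilon$ throughout $\Val_X^N$. The hard part is thus concentrated in the properness of $A_X$, which is the main technical achievement of the earlier sections; once available, the argument above is essentially a standard compactness routine.
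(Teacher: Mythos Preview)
Your $\supseteq$ argument is correct and matches the paper's implication (iv)$\Rightarrow$(v) in the proof of Theorem~\ref{t:mult-ideal}: the lower semicontinuous function $h$ has compact sublevel sets on $\Val_X^N$ (via $\{A_X\le M\}\cap\Val_X^N\subset\{A_X^{(1)}\le M\}\cap\Val_X^N$ and Theorem~\ref{thm:extend}(iii)), hence attains its positive infimum, and Theorem~\ref{t:1} then closes the loop.

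The $\subseteq$ argument, however, contains a genuine error. You claim that since
\[
h(\nu)=\nu(f)-c\,\nu(\fa_\bullet)+A_X(\nu)-\ep\,\nu(\cI_N)
\]
is lower semicontinuous, the set $\{h\ge 0\}$ is closed. This is false: lower semicontinuity makes \emph{sublevel} sets $\{h\le a\}$ closed, not superlevel sets. A one-line counterexample on $[0,1]$ is $h(0)=-1$, $h(x)=0$ for $x>0$: this is lsc, nonnegative on the dense set $(0,1]$, yet $h(0)<0$. So ``lsc $+$ nonnegative on a dense set'' does not force nonnegativity everywhere, and density of $\DVal_X$ alone cannot carry the inequality to all of $\Val_X$.

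The paper does not use bare density here; it exploits the retraction structure of $\Val_X^N$. By Proposition~\ref{p:log a-bullet}(a) and Theorem~\ref{thm:extend}(i), the function $\phi(\nu)=\nu(f)-c\,\nu(\fa_\bullet)+A_X(\nu)$ is \emph{continuous} on every face of every dual complex $\D_\p^N$, so $\phi\ge\ep$ on the dense rational (divisorial) points of a face forces $\phi\ge\ep$ on the whole face. Then, crucially, Theorem~\ref{thm:extend}(ii) and Proposition~\ref{p:log a-bullet}(b) give $\phi(\nu)=\lim_\p\phi\bigl(r_\p^N(\nu)\bigr)$ for every $\nu\in\Val_X^N$, so the inequality passes to the limit. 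This is exactly the content of (vi)$\Rightarrow$(v) in the paper's proof; your density shortcut skips this step, and without it the argument breaks.
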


\smallskip

In the special case $\fa_\bullet=\cO_X$, this last result show that there exists an effective $\Q$-boundary $\D$ with $(X,\D)$ klt iff $A_X(\n)>0$ for all real valuations $\n$. When $X$ admits an effective $\Q$-boundary $\D$ with $(X,\D)$ log canonical, one easily sees that $A_X\ge 0$. However, in that case the converse already fails in dimension three, as was recently shown by Yuchen Zhang for a normal isolated cone singularity~\cite{Zh13}.

\medskip

In the last part of the paper, we provide an alternative approach to the notion of \emph{numerically Cartier divisors} introduced in \cite{BdFF}. A Weil divisor on $X$ is said to be numerically ($\Q$-)Cartier if it is the push-forward of a $\pi$-numerically trivial ($\Q$-)divisor
for some (equivalently, any) resolution of singularities $\pi : X' \to X$. This naturally leads to the definition of a group of \emph{numerical divisor classes} $\Cln(X)$, defined as the quotient of the group of Weil divisors by numerically Cartier divisors. We prove that the abelian group $\Cln(X)$ is always finitely generated. The $\Q$-vector space $\Cln(X)_\Q$ is trivial when X is either $\Q$-factorial or has dimension $2$, thanks to Mumford's numerical pull-back. Building on an argument of Kawamata,
we further prove that every numerically $\Q$-Cartier divisor is already $\Q$-Cartier when $X$ has rational singularities. 

We say that $X$ is \emph{numerically $\Q$-Gorenstein} when $K_X$ is numerically $\Q$-Cartier. This means that for some (equivalently, any) resolution of singularities $\p:X'\to X$, $K_{X'}$ is $\p$-numerically equivalent to a $\p$-exceptional $\Q$-divisor, which is necessarily unique, and denoted by $K_{X'/X}^\num$. It relates to the log discrepancy function by
$$
A_X(\ord_E)=1+\ord_E\left(K_{X'/X}^\num\right)
$$
for all prime divisors $E\subset X'$. As a consequence of Theorem \ref{t:1}, we show:

\begin{thm}\label{t:3} Assume that $X$ is numerically $\Q$-Gorenstein, and let $\fa\subset\cO_X$ be an ideal sheaf. Let also $\p:X'\to X$ be a log resolution of $(X,\fa)$, so that $\p^{-1}\fa\cdot\cO_{X'}=\cO_{X'}(-D)$ with $D$ an effective Cartier divisor. For each exponent $c>0$ we then have
$$
\J(\fa^c) =\p_*\cO_{X'}\left(\lceil K_{X'/X}^\num-c D\rceil\right).
$$
\end{thm}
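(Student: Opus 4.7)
The plan is to recast the pushforward in purely valuative terms and then identify it with $\J(X,\fa^c)$ via Theorem~\ref{t:2}.

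Since $\pi^{-1}\fa\cdot\cO_{X'}=\cO_{X'}(-D)$ and $A_X(\ord_E)=1+\ord_E(K_{X'/X}^\num)$ for every prime $E\subset X'$, unwinding the ceiling shows that a germ $f\in\cO_X$ belongs to $\pi_*\cO_{X'}(\lceil K_{X'/X}^\num-cD\rceil)$ if and only if
\[
\ord_E(f)\ge\lfloor c\,\ord_E(\fa)-\ord_E(K_{X'/X}^\num)\rfloor\fall\text{prime }E\subset X';
\]
since $\ord_E(f)\in\Z$, this is equivalent to the strict inequality
\begin{equation}\label{eq:div-strict}
\ord_E(f)>c\,\ord_E(\fa)-A_X(\ord_E)\fall\text{prime }E\subset X'.
\end{equation}
On the other hand, applying Theorem~\ref{t:2} to the constant graded sequence $\fa_m:=\fa^m$ (for which $\nu(\fa_\bullet)=\nu(\fa)$) yields
\[
\J(X,\fa^c)=\{f\in\cO_X\mid \nu(f)>c\,\nu(\fa)-A_X(\nu)\text{ for every real valuation }\nu\},
\]
and specializing $\nu=\ord_E$ with $E\subset X'$ gives the inclusion $\J(X,\fa^c)\subseteq\pi_*\cO_{X'}(\lceil K_{X'/X}^\num-cD\rceil)$.

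For the reverse inclusion, I would fix $f$ satisfying \eqref{eq:div-strict} and an arbitrary real valuation $\nu$ on $X$, viewed as a valuation on $K(X)=K(X')$ with center $\xi\in X'$. Let $E_1,\dots,E_k$ be the components of $\Exc(\pi)\cup\Supp(D)\cup\Supp(K_{X'/X}^\num)$ passing through $\xi$, and extend their local equations to a regular system of parameters $z_1,\dots,z_n$ at $\xi$ using the SNC property of the log resolution. Writing $t_i:=\nu(z_i)\ge 0$, the SNC support of $D$ and $K_{X'/X}^\num$ yields
\[
\nu(\fa)=\sum_i t_i\,\ord_{E_i}(\fa),\qquad \nu(K_{X'/X}^\num)=\sum_i t_i\bigl(A_X(\ord_{E_i})-1\bigr),
\]
while expanding $\pi^*f$ in the completion of $\cO_{X',\xi}$ gives $\nu(f)\ge\sum_i t_i\,\ord_{E_i}(f)$. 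Combining these with the additive identity $A_X(\nu)=A_{X'}(\nu)+\nu(K_{X'/X}^\num)$ (an extension to $\Val_X$ of its divisorial counterpart, valid since $X$ is numerically $\Q$-Gorenstein and $X'$ is smooth) and the monomial lower bound $A_{X'}(\nu)\ge\sum_i t_i$ on the smooth $X'$, one obtains
\[
\nu(f)+A_X(\nu)-c\,\nu(\fa)\ge\Bigl(A_{X'}(\nu)-\sum_i t_i\Bigr)+\sum_i t_i\bigl(\ord_{E_i}(f)+A_X(\ord_{E_i})-c\,\ord_{E_i}(\fa)\bigr).
\]
Each summand in the second sum is strictly positive by \eqref{eq:div-strict} for those $i$ with $t_i>0$, while the first parenthesized term is nonnegative, so the right-hand side is strictly positive except possibly when every $t_i$ vanishes; in that degenerate case $\xi$ lies outside $\Exc(\pi)\cup\pi^{-1}(Z(\fa))$, so that $\nu(\fa)=\nu(K_{X'/X}^\num)=0$, and the required inequality reduces to the trivial $\nu(f)+A_{X'}(\nu)>0$ at a smooth point.

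The main obstacle is the pair of assertions $A_X(\nu)=A_{X'}(\nu)+\nu(K_{X'/X}^\num)$ and $A_{X'}(\nu)\ge\sum_i t_i$ for general real $\nu$. Both are immediate by direct computation on divisorial valuations, and should extend to $\Val_X$ via the log discrepancy framework developed earlier in the paper: the first by compatibility of $A_X$ under finite birational modifications together with the definition of $K_{X'/X}^\num$, the second by comparison of $\nu$ with the associated quasi-monomial valuation of weights $t_i$ in the SNC coordinates $z_1,\dots,z_k$.
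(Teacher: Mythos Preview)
Your argument is correct, and the two ``obstacles'' you flag are precisely what the paper establishes: the identity $A_X(\nu)=A_{X'}(\nu)+\nu(K_{X'/X}^\num)$ on all of $\Val_X$ appears in the proof of Lemma~\ref{lem:numqgor}, and the bound $A_{X'}(\nu)\ge\sum_i t_i$ is the retraction inequality $A_{X'}\ge A_{X'}\circ r_\pi^N$ from the smooth case (Lemma~\ref{lem:JM}). Note also that since the $E_i$ are by construction exactly the components through $\xi=c_{X'}(\nu)$, each $t_i=\nu(z_i)$ is automatically positive (as $z_i\in\fm_\xi$), so your ``degenerate case'' is simply $k=0$, i.e.\ $\xi$ lies off the exceptional and $\fa$-loci entirely.

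The route differs slightly from the paper's. You invoke Theorem~\ref{t:2} and carry out the retraction estimate directly on the given log resolution $X'$, valuation by valuation. The paper instead passes to a \emph{good} resolution $X_\pi\to X$ dominating $X'$, where the full machinery of $\Val_X^N$ and Lemma~\ref{lem:numqgor} immediately gives $\J(X,\fa^c)=\pi_*\cO_{X_\pi}(\lceil K_{X_\pi/X}^\num-cD\rceil)$ by rerunning the argument of Theorem~\ref{t:mult-ideal}; it then descends to $X'$ via $K_{X_\pi/X}^\num=\rho^*K_{X'/X}^\num+K_{X_\pi/X'}$ and the effectivity of $K_{X_\pi/X'}$. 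Your approach avoids the auxiliary higher model and the descent step, at the cost of redoing the retraction computation by hand; the paper's approach keeps the argument modular by reusing Theorem~\ref{t:mult-ideal} as a black box.
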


In dimension two, this result says that the multiplier ideals introduced in \cite{dFH09}
agree with the numerical multiplier ideals defined using Mumford's numerical pull-back.

Since the underlying variety of any klt pair has rational singularities, Theorem \ref{t:3} applied to $\fa=\cO_X$ yields: 

\begin{cor} Let $X$ be a normal algebraic variety. The following conditions are equivalent:
\begin{enumerate}
\item $X$ is $\Q$-Gorenstein and log terminal;
\item $X$ is numerically $\Q$-Gorenstein and $A_X(\ord_E)>0$ for all prime divisors $E$ on some (equivalently, any) log resolution $X'$ of $X$.
\end{enumerate} 
\end{cor}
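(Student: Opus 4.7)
The plan is to use Theorem \ref{t:3} (with $\fa=\cO_X$) to convert the numerical log discrepancy hypothesis into triviality of the multiplier ideal $\J(X,\cO_X)$, and then exploit the chain of implications already summarized in the introduction.

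For the implication (a) $\Rightarrow$ (b), the argument will be essentially tautological. If $K_X$ is $\Q$-Cartier, then it is a fortiori numerically $\Q$-Cartier, so $X$ is numerically $\Q$-Gorenstein, and for any resolution $\p\colon X'\to X$ the divisor $K_{X'/X}^\num$ coincides with the usual relative canonical divisor $K_{X'}-\p^*K_X$. The condition $A_X(\ord_E)>0$ for every prime divisor $E$ on a log resolution is then precisely the classical definition of $X$ being log terminal.

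For the harder direction (b) $\Rightarrow$ (a), I would proceed in the following steps. Fix a log resolution $\p\colon X'\to X$ (which is automatically a log resolution of $(X,\cO_X)$ with $D=0$). Theorem \ref{t:3} applied with $\fa=\cO_X$ and $c=1$ yields
$$
\J(X,\cO_X)=\p_*\cO_{X'}\bigl(\lceil K_{X'/X}^\num\rceil\bigr).
$$
The assumption $A_X(\ord_E)>0$ means $\ord_E(K_{X'/X}^\num)=A_X(\ord_E)-1>-1$ for every $\p$-exceptional prime $E$, so the round-up $\lceil K_{X'/X}^\num\rceil$ is an effective $\p$-exceptional divisor. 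Since $X$ is normal, pushing forward the structure sheaf twisted by such a divisor returns $\cO_X$, and we conclude that $\J(X,\cO_X)=\cO_X$.

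By the characterization recalled just after Theorem \ref{t:1}, the triviality $\J(X,\cO_X)=\cO_X$ implies the existence of an effective $\Q$-boundary $\D$ such that $(X,\D)$ is klt. The underlying variety of a klt pair has rational singularities (a classical fact due to Elkik, reproved in \cite{KollarMori} style arguments). Now $X$ is simultaneously assumed to be numerically $\Q$-Gorenstein, and by the result based on Kawamata's argument stated in the introduction, every numerically $\Q$-Cartier divisor on a variety with rational singularities is actually $\Q$-Cartier. Applied to $K_X$, this shows $X$ is $\Q$-Gorenstein. Finally, combining $\Q$-Gorensteinness with the equality $\J(X,\cO_X)=\cO_X$ gives, again by the remark in the introduction, that $X$ is log terminal.

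The main obstacle is the step extracting $\Q$-Cartierness of $K_X$ from mere numerical $\Q$-Cartierness, which requires rational singularities; the whole point of the argument is therefore to first produce an auxiliary klt boundary (via the numerical log discrepancy hypothesis through Theorem \ref{t:3}) in order to access rational singularities and then upgrade from numerical to genuine $\Q$-Gorenstein.
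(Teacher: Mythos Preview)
Your proposal is correct and follows essentially the same approach as the paper: apply Theorem~\ref{t:3} with $\fa=\cO_X$ to obtain $\J(X,\cO_X)=\cO_X$, deduce the existence of a klt boundary (hence rational singularities), and then invoke the rational-singularities result to upgrade numerical $\Q$-Gorensteinness to genuine $\Q$-Gorensteinness. The only cosmetic difference is that the paper phrases the final step as ``$(X,\D)$ klt $\Rightarrow$ $(X,0)$ klt'' rather than re-reading the multiplier ideal condition, but this is the same content.
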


\begin{ack} We are very grateful to Claire Voisin for providing a proof of Lemma \ref{lem:numsing}. We would also like to thank Mattias Jonsson for a key observation that helped us simplify the statements of the main results. 
\end{ack}

\section{Valuation spaces}
\label{s:val}
Throughout the paper we work over the field $\C$ of complex numbers. In this section we review some properties of valuation spaces, mostly following \cite{JM10}. 

\subsection{The space of real valuations}\label{sec:berko}
Let $X$ be an algebraic variety. By a \emph{valuation on $X$}, we will mean a real-valued valuation $\n$ on the function field of $X$ that is trivial on the base field and admits a center on $X$. Recall that the latter is characterized as the unique scheme point $c_X(\n)=\xi\in X$ such that $\n>0$ on the maximal ideal of $\cO_{X,\xi}$. We denote by $\Val_X$ the space of valuations on $X$, endowed with the topology of pointwise convergence. Mapping a valuation to its center defines an \emph{anticontinuous}\footnote{\ie the inverse image of an open subset is closed} map
$$
c_X:\Val_X\to X. 
$$
Every prime divisor $E$ \emph{over} $X$ (\ie in a normal birational model $X'$, proper over $X$) determines a valuation $\ord_E\in\Val_X$ given by the order of vanishing at the generic point of $E$. A \emph{divisorial valuation} is a valuation of the form 
$\nu=c\,\ord_E$ for some prime divisor $E$ over $X$ and some $c \in \R_+^*$. We denote by
$$
\DVal_X\subset\Val_X
$$
the set of divisorial valuations.

\subsection{Normalized valuation spaces}\label{sec:norm}

For every (coherent) ideal sheaf $\fa\subset\cO_X$ and every $\n\in \Val_X$, we set as usual
$$
\nu(\fa):=\min\left\{\n(f)\mid f\in\fa_{c_X(\n)}\right\}\in[0,+\infty).
$$

\begin{defi}\label{defi:regval} A \emph{normalizing subscheme} is a (non-trivial) closed subscheme of $X$ containing $\Sing(X)$. The \emph{normalized valuation space} defined by $N$ is 
$$
\Val_X^N:=\left\{\nu\in \Val_X\mid\nu(\I_N)=1\right\},
$$
with $\cI_N$ denoting the ideal sheaf defining $N$. 
\end{defi}
Note that 
$$
\R_+^*\cdot\Val_X^N=\left\{\n\in\Val_X\mid\n(\I_N)>0\right\}=c_X^{-1}(N), 
$$
which is thus open in $\Val_X$ and only depends on the Zariski closed set $N_\red$. We also clearly have
\begin{equation}\label{equ:cup}
\Val_X=\bigcup_{N\subset X}\R_+^*\cdot\Val_X^N,
\end{equation}
with $N$ ranging over all normalizing subschemes.

The point of introducing this terminology is that the normalized valuation space $\Val_X^N$ admits a simple description as a limit of simplicial complexes. 

\begin{defi} A \emph{good resolution} of a normalizing subscheme $N\subset X$ is a proper birational morphism $\pi:X_\pi\to X$ such that
\begin{itemize}
\item $X_\pi$ smooth;
\item $\pi$ is an isomorphism over $X\setminus N$;
\item $\pi^{-1}(N)\supset\Exc(\pi)$ both have pure codimension one, and $\pi^{-1}(N)_\red$ is a simple normal crossing divisor $\sum_{i\in I} E_i$ such that $E_J:=\bigcap_{j\in J}E_j$ is irreducible (or empty) for all $J\subset I$.
\end{itemize}
\end{defi}

Let $\p$ be a good resolution of $N$, and assume that $E_J$ as above is non-empty. At its generic point $\e_J$, the normal crossing condition guarantees that any choice of local equations $z_j\in\cO_{X_\pi,\e_J}$ for $E_j$, $j\in J$ yields a regular system of parameters. 
By Cohen's theorem we thus have $\^\O_{X_\p,\e_J} \cong \C[[z_j,\,j\in J]]$.
To every weight $w = (w_j)_{j \in J}\in\R_+^J$, we associate the monomial valuation $\n_w$ defined by
\begin{equation}\label{e1}
\nu_w \left(\sum_{\a\in\N^J} a_\a z^\a \right) := \min\left\{\sum_i w_i\a_i \mid a_\a \neq 0 \right\}. 
\end{equation}
Viewed as a valuation on $X$, $\nu_w$ is called a {\it quasi-monomial valuation} and belongs to $\Val_X^N$. The construction is independent of the choice of the local equations $z_j$, $j\in J$ (see~\cite[Sections 3--4]{JM10} for more details).

Note that $\n_w \in \Val_X^N$ if and only if $\sum_{j \in J} w_j\ord_{E_j}(N) = 1$. If we denote by $\D_\p^N \subset \Val_X^N$ the set of all normalized quasi-monomial valuations so obtained, then $\D_\p^N$ is 
a geometric realization of the \emph{dual complex} of $\sum_i E_i$, \ie the simplicial complex whose vertices are in bijection with 
$I$ and contains one simplicial face $\sigma_J$ joining all 
vertices $j \in J$ for any subset $J\subset I$ such that $E_J\neq \emptyset$.

Further, there is a natural continuous retraction 
$$
r_\p^N \colon \Val_X^N \to \D_\p^N,
$$ 
defined by letting $r_\pi^N(\nu)$ be the unique monomial valuation taking the value $\nu(E_j)$ on $E_j$. Note that $r_\p^N(\nu)$ belongs to the relative interior of the face $\s_J$, with 
$$
J:=\left\{j\in I\mid c_{X_\pi}(\nu)\in E_j\right\}. 
$$
If $\p'$ factors through $\p$ (in which case we write $\p' \ge \p$), then there is a natural inclusion $\D_\p^N \inj \D_{\p'}^N$. We then have: 

\begin{thm}[\protect{\cite{Ber,Thu}}]\label{thm:ber}
$$
\Val_X^N=\overline{\bigcup_\p\D_\p^N},
$$
where $\p$ runs over all good resolutions of $N$. More precisely, $\lim_\p r_\p^N(\n)=\n$ for each $\n\in\Val_X$. 
\end{thm}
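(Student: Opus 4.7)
My plan is to establish the stronger convergence statement that for every $\nu\in\Val_X^N$ the net $\bigl(r_\p^N(\nu)\bigr)_\p$, indexed by good resolutions of $N$ directed by refinement, converges to $\nu$ in the topology of pointwise convergence. Since $\D_\p^N\subset\Val_X^N$ by definition and $\Val_X^N$ is closed in $\Val_X$, this convergence yields both inclusions of the claimed identity.

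By definition of the topology on $\Val_X$ and the additivity of valuations, it is enough to verify $r_\p^N(\nu)(f)\to\nu(f)$ for every germ $f\in\O_{X,c_X(\nu)}$. I would obtain this by combining a uniform upper bound valid for every good resolution with an exact equality whenever $\p$ is sufficiently well adapted to $f$.

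For the uniform bound, fix a good resolution $\p$ and set $\xi:=c_{X_\p}(\nu)$, $J:=\{j\in I:\xi\in E_j\}$, and let $\e_J$ denote the generic point of $E_J$. The local equations $z_j$, $j\in J$, form a regular system of parameters at $\e_J$, so by Cohen's theorem $\widehat{\O}_{X_\p,\e_J}\cong k(\e_J)[[z_j,j\in J]]$. Expanding $f=\sum_\a a_\a z^\a$ and setting $w_j:=\nu(E_j)$, the monomial formula gives $r_\p^N(\nu)(f)=\min\{\sum_j\a_j w_j:a_\a\ne 0\}$. To bound $\nu(f)$ from below by the same quantity, I would approximate $f$ modulo a high power $(z_j,j\in J)^M$ of the ideal by honest elements of $\O_{X_\p,\xi}$, apply the non-archimedean inequality term by term, and let $M\to\infty$. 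This analysis is carried out in detail in Sections~3--4 of~\cite{JM10}.

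For the equality, observe that when $\p$ dominates a log resolution of $(X,\I_N\cdot(f))$ the pull-back $\p^*(f)$ is locally monomial at $\xi$: there exist $J'\subset J$, exponents $a_j\in\N$, and a unit $u\in\O_{X_\p,\xi}^*$ with $f=u\prod_{j\in J'}z_j^{a_j}$. A unit at $\xi$ has $\nu$-value $0$ and, being non-vanishing on $E_J$ generically, has non-zero image in $k(\e_J)$; hence both $\nu(f)$ and $r_\p^N(\nu)(f)$ equal $\sum_{j\in J'}a_j w_j$. Since the set of good resolutions is directed by refinement and this equality is preserved by further blow-ups, we conclude $\lim_\p r_\p^N(\nu)(f)=\nu(f)$, and thus $\lim_\p r_\p^N(\nu)=\nu$.

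The principal technical obstacle is the truncation argument in the uniform bound: the coefficients $a_\a\in k(\e_J)$ do not lie in the function field $k(X_\p)$ on which $\nu$ is defined, so $\nu$ cannot be applied to them directly. One must replace formal truncations by genuine approximations in $\O_{X_\p,\xi}$ with error controlled in the $(z_j)$-adic filtration, via the Artin--Rees lemma or Krull's intersection theorem; once this is handled the rest of the argument is essentially formal and relies only on the directedness of the system of good resolutions together with the compatibility of the retractions $r_\p^N$ under refinement.
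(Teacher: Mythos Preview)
The paper does not supply its own proof of this theorem: it is stated with attribution to \cite{Ber,Thu} and used as a black box. Your proposal is correct and follows the standard argument, essentially the one given in \cite[Sections~3--4]{JM10}: the inequality $r_\p^N(\nu)(f)\le\nu(f)$ for every good resolution $\p$ (this is also the content of Proposition~\ref{p:log a}(b) in the paper), combined with the equality once $\p$ dominates a log resolution of $(X,\I_N\cdot(f))$, yields $r_\p^N(\nu)(f)\to\nu(f)$ for each $f$, hence $r_\p^N(\nu)\to\nu$ in the topology of pointwise convergence. Your identification of the truncation step as the only genuine technical point is accurate, and your proposed fix via Artin--Rees is the right one. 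One cosmetic remark: the paper writes ``for each $\nu\in\Val_X$'' but, as you implicitly note, $r_\p^N$ is only defined on $\Val_X^N$, so the convergence statement should be read for $\nu\in\Val_X^N$.
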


A subset $\s \subset \Val_X^N$ is said to be a \emph{face} if $\s$ is a
face of $\D_\p^N$ for some $\p$. A face of $\D_\p^N$ is also called a \emph{$\p$-face}, and it can be endowed with a canonical affine structure induced from $\D_\p^N$.
A real valued function on $\Val_X^N$ is said to be \emph{affine} (resp., \emph{convex}) on a face if it is so in terms of the variable $w$ as in~\eqref{e1}.
We say that a property holds \emph{on small faces} if there exists $\p$
such that the property holds on
the faces of $\D_{\p'}$ for all $\pi'\ge\pi$.

\subsection{Functions defined by ideals}

\begin{prop}
\label{p:log a} Let $N\subset X$ be a normalizing subscheme and $\fa \subset \O_X$ be a coherent ideal sheaf. Then: 
\begin{enumerate}
\item
$\n\mapsto\nu(\fa)$ is a continuous function $\Val_X\to[0,+\infty)$, and concave on each face of $\Val_X^N$;
\item
for each good resolution $\p$ of $N$ we have $r_\p^N(\n)(\fa)\ge\n(\fa)$ on $\Val_X^N$. 
\end{enumerate}
If $N$ further contains the zero locus of $\fa$,  and $\p$ is a good resolution of $N$ dominating the blow-up of $\fa$, then $\n\mapsto\nu(\fa)$ is affine on the faces of $\D_\p^N$ and $r_\p^N(\n)(\fa)=\n(\fa)$ on $\Val_X^N$. In particular, $\nu\mapsto\n(\fa)$ is bounded on $\Val_X^N$. 
\end{prop}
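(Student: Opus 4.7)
For (a), the continuity of $\n\mapsto\n(\fa)$ is local: near $\n_0\in\Val_X$ with center $x_0=c_X(\n_0)$, choose generators $f_1,\dots,f_r$ of the stalk $\fa_{x_0}$, which remain generators on a Zariski open neighborhood whose $c_X$-preimage contains a neighborhood of $\n_0$. There, $\n(\fa)=\min_i\n(f_i)$ is a finite minimum of the continuous functions $\n\mapsto\n(f_i)$, hence continuous. For concavity on a $\p$-face parametrized by weights $w\in\R_+^J$, use the Cohen isomorphism $\hat\cO_{X_\p,\e_J}\cong\kappa(\e_J)[[z_j\mid j\in J]]$ to expand $f_i=\sum_\a c_{\a,i}z^\a$. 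Then
$$
\n_w(f_i)=\min\bigl\{\textstyle\sum_{j\in J}w_j\a_j\,\bigm|\,c_{\a,i}\neq 0\bigr\}
$$
is a minimum of linear functions of $w$, hence concave; therefore $\n_w(\fa)=\min_i\n_w(f_i)$ is a minimum of concaves, which is again concave.

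For (b), the required comparison between $\n(\fa)$ and $r_\p^N(\n)(\fa)$ reduces to a pointwise comparison on generators, which in turn follows from the expansion above: both quantities admit a description as a minimum of weighted-monomial terms, and the relations $\n(z_j)=w_j=r_\p^N(\n)(z_j)$ combined with the valuative inequality applied to $f=\sum_\a c_\a z^\a$ give the inequality, which then descends to $\fa$ by taking the minimum over a finite generating set. For the refined statement, the additional hypothesis forces $\fa\cdot\cO_{X_\p}=\cO_{X_\p}(-D)$ to be invertible with $D=\sum_i a_iE_i$ supported in $\p^{-1}(N)$. At any $\xi\in E_J\subset X_\p$, the principal ideal $\fa\cdot\cO_{X_\p,\xi}$ is generated by $\prod_{j\in J}z_j^{a_j}$ up to a unit, whence
$$
\n(\fa)\;=\;\n(\fa\cdot\cO_{X_\p})\;=\;\sum_{j\in J}a_j\n(E_j)\;=\;r_\p^N(\n)(\fa),
$$
the right-hand side being affine in $w=(\n(E_j))_j$. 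This yields both affineness on the faces of $\D_\p^N$ and the equality on $\Val_X^N$. Boundedness of $\n\mapsto\n(\fa)$ on $\Val_X^N$ is then immediate, as $\D_\p^N$ is a finite union of compact simplices (cut out by the normalization $\sum_i w_i\ord_{E_i}(\cI_N)=1$) on which an affine function is bounded.

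The main delicate point I anticipate is the identification $\n(\fa)=\n(\fa\cdot\cO_{X_\p})$ underlying the invertible-pullback computation: one must verify that, for $\n$ with center $\xi$ on $X_\p$ and image $x=\p(\xi)$ on $X$, every element of $\fa\cdot\cO_{X_\p,\xi}$ is an $\cO_{X_\p,\xi}$-linear combination of pullbacks of sections of $\fa_x$, so that the infima of $\n$ over the two ideals coincide. This is routine once one unwinds the definition of the inverse-image ideal sheaf along the proper morphism $\p$, but it is the step requiring the most care; the rest of the argument then amounts to unfolding the Cohen-completion formulas and exploiting the compactness of $\D_\p^N$.
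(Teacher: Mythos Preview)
Your proof follows the same route as the paper's: concavity via the power-series expansion as a minimum of linear functions of $w$, the retraction comparison via the ultrametric inequality $\nu(f)\ge\nu_w(f)$, and the refined equality/affineness via invertibility of $\fa\cdot\cO_{X_\pi}$ when $\pi$ dominates the blow-up (your treatment of this last step is in fact more explicit than the paper's terse ``equality holds when $f$ is a monomial'').

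One caveat on (b): the valuative inequality you invoke gives $\nu(f)\ge r_\pi^N(\nu)(f)$ and hence $\nu(\fa)\ge r_\pi^N(\nu)(\fa)$, which is the \emph{reverse} of the inequality as printed in the statement. This is a sign slip in the paper; both your argument and the paper's own proof actually establish this correct direction (and it is this direction that is used downstream, e.g.\ for $\phi_m\ge\phi_m\circ r_\pi^N$ in the proof of the main theorem). Relatedly, your phrase ``both quantities admit a description as a minimum of weighted-monomial terms'' is not quite right: only $r_\pi^N(\nu)(f)=\nu_w(f)$ has that form, whereas for a general $\nu$ the bound comes precisely from the ultrametric property $\nu(\sum_\alpha c_\alpha z^\alpha)\ge\min_{c_\alpha\ne 0}\sum_j\alpha_j\nu(z_j)$, which you do then correctly invoke.
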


\begin{proof} The proof is similar to \cite{BFJ08}, we briefly recall the argument. Using the same notation introduced in Section~\ref{s:val}, a valuation 
corresponding to a point in a 
face $\sigma$ of some $\D_\p^N$ is parametrized by 
$w = (w_j)_{j \in J}$ with $w_j \ge 0$ and $\sum_{j\in J} w_j\ord_{E_j}(N) = 1$. 
For every local function $f$ on $X$ we have
$\nu_w(f) = \min\left\{ \sum_iw_i\a_i \mid a_\a \neq 0 \right\}$ with
$f\o \p = \sum_\a a_\a z^\a$. Since $w \mapsto \nu_w(h)$ is the minimum of a collection of affine functions, it is concave. It follows that $\n(\fa)=\min\left\{\nu(f)\mid f\in\fa\right\}$ is convex on $\s$.
Moreover, if $N$ contains the zero locus of $\fa$ and $\pi$ dominates the blow-up of $\fa$, then this function is affine on $\s$. 
This proves~(a) and the first half of the last assertion. 

Still bearing in mind the above interpretation of valuations in terms of points on $\s$,
consider any valuation $\nu$ on 
$\C[z_1, \dots, z_n]$, and suppose $w_i = \nu(z_i)\ge 0$. Then $\nu(f) \ge \nu_w(f)$ 
for all $f\in \C[z_1,\dots, z_n]$, and equality holds when $f$ is a monomial. 
This yields the second half of the last assertion. 
\end{proof}

More generally, recall that a \emph{graded sequence of ideals} $\fa_\bullet = (\fa_m)_{m \ge 0}$ is a sequence of coherent ideal sheaves such that $\fa_m\.\fa_n \subset \fa_{m+n}$ for all $m,n$. This implies that
$\nu(\fa_m)$ is a subadditive sequence for each $\n\in\Val_X$, so that we can set
$$
\n(\fa_\bullet)=\lim_{m\to\infty}m^{-1}\nu(\fa_m)=\inf_{m\ge 1}m^{-1}\n(\fa_m).
$$

Proposition \ref{p:log a} generalizes to: 
\begin{prop}
\label{p:log a-bullet}
Let $N\subset X$ be a normalizing subscheme and let $\pi$ be a good resolution of $N$. For any graded sequence of ideal sheaves $\fa_\bullet = (\fa_m)_{m \ge 0}$, $\nu\mapsto\nu(\fa_\bullet)$ defines an upper semicontinuous function $\Val_X\to[0,+\infty)$ such that
\begin{enumerate}
\item $\n\mapsto\nu(\fa_\bullet)$ is concave and continuous on each face of $\Val_X^N$;
\item $r_\p^N(\n)(\fa_\bullet)\ge\n(\fa_\bullet)$ on $\Val_X^N$ for each good resolution $\p$ of $N$.   
\end{enumerate}
Furthermore, if $N$ contains the zero locus of $\fa_1$ (or, equivalently, of $\fa_m$ for all $m$), then $\nu\mapsto\nu(\fa_\bullet)$ is also bounded on $\Val_X^N$. 
\end{prop}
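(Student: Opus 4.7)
The entire proof is driven by the identity $\nu(\fa_\bullet) = \inf_{m \ge 1} m^{-1}\nu(\fa_m)$, which lets us inherit each property from the corresponding assertion in Proposition~\ref{p:log a} applied to each $\fa_m$, then pass to the infimum over $m$.

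Upper semicontinuity on $\Val_X$ is immediate because each $\nu \mapsto m^{-1}\nu(\fa_m)$ is continuous by Proposition~\ref{p:log a}(a), and an infimum of a family of continuous functions is upper semicontinuous. On any face $\sigma$ of $\Val_X^N$, each $m^{-1}\nu(\fa_m)$ is concave, and an infimum of concave functions is concave, yielding the concavity part of (a). For the retraction inequality (b), Proposition~\ref{p:log a}(b) gives $r_\pi^N(\nu)(\fa_m) \ge \nu(\fa_m)$ for every $m$, and dividing by $m$ and passing to the infimum preserves the inequality. For the boundedness statement, assuming $N$ contains the zero locus of $\fa_1$, Proposition~\ref{p:log a} applied to $\fa_1$ with a good resolution dominating the blow-up of $\fa_1$ shows that $\nu \mapsto \nu(\fa_1)$ is bounded on $\Val_X^N$; the graded-sequence relation $\fa_1^m \subset \fa_m$ (an easy induction from $\fa_i\cdot\fa_j \subset \fa_{i+j}$) gives $\nu(\fa_\bullet) \le \nu(\fa_1)$, transferring the bound.

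The only step not given by a direct transfer is continuity on \emph{closed} faces, since an infimum of continuous functions is only upper semicontinuous in general. Here I would invoke the classical convex-analytic fact that a finite-valued concave upper semicontinuous function on a compact simplex is automatically continuous. Continuity on the relative interior follows from concavity alone, while at a boundary point $p$ of a face $\sigma$, given $x_n \to p$ in $\sigma$, an interior point $q \in \sigma$, and a decomposition $x_n = (1-t_n) q + t_n y_n$ with $y_n \in \partial\sigma$ and $t_n \to 1$, concavity gives $f(x_n) \ge (1-t_n) f(q) + t_n f(y_n)$. By induction on $\dim \sigma$ applied to the proper face containing $y_n$, one has $f(y_n) \to f(p)$, so $\liminf f(x_n) \ge f(p)$; combined with upper semicontinuity this yields continuity. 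The finite-valuedness required in this convex-analytic step is precisely $\nu(\fa_\bullet) \le \nu(\fa_1) < +\infty$. Upgrading ``concave $+$ usc'' to ``continuous'' at the boundary of a face is the main subtle point of the proof; every other assertion follows directly from Proposition~\ref{p:log a} by passing to the infimum.
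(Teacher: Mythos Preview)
Your proof is correct and matches the paper's approach: every assertion except continuity on faces transfers directly from Proposition~\ref{p:log a} by taking the infimum over $m$, and continuity is then deduced from the convex-analytic fact that a finite concave upper semicontinuous function on a simplex is automatically continuous. The paper states this last step in one line as ``an elementary fact in convex analysis'' (writing, apparently by typo, ``lsc'' for ``usc''), while you supply a brief inductive sketch of it.
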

\begin{proof} Only the continuity on the faces is not a direct consequence of Proposition \ref{p:log a}. But since each face $\s$ of $\Val_X^N$ is a simplex, it follows from an elementary fact in convex analysis that $\n\mapsto\nu(\fa)$, being concave and lsc, is automatically continuous on $\s$. 
\end{proof}

\begin{rmk}\label{rem:Lip} As a consequence of~\cite[Theorem~B]{BFJ13}, one can show the following uniform Lipschitz property: assume that a given face $\s$ of $\Val_X^N$ has the property that the closure $Z\subset X$ of the center of some (equivalently, any) valuation of the relative interior of $\s$ is proper over $\C$. Then there exists $C>0$ such that for any graded sequence of ideal sheaves $\fa_\bullet$ the function $\n\mapsto\nu(\fa_\bullet)$ is Lipschitz continuous on $\s$ with Lipschitz constant $\le C\ord_Z(\fa_\bullet)$.
\end{rmk}

\section{The log discrepancy function}
\label{s:log discr}

Throughout this section, $X$ denotes a normal algebraic variety. 

\subsection{The log discrepancy of a divisorial valuation}
Let $K_X$ be a canonical Weil divisor on $X$, \ie the closure in $X$ of the divisor of a given rational form of top degree on $X_\reg$. The choice of $K_X$ induces on the one hand a graded sequence of fractional ideal sheaves $\left(\cO_X(-mK_X)\right)_{m\in\N}$, and on the other hand a canonical Weil divisor $K_{X'}$ for each birational model $X'$ of $X$.

Following \cite{dFH09,BdFF}, we define for all $m\ge 1$ the \emph{$m$-limiting log discrepancy function} as the unique homogeneous function $A_X^{(m)}:\DVal_X\to\R$ such that
$$
A_X^{(m)}(\ord_E)=1+\ord_E(K_{X'})-m^{-1}\ord_E\cO_X(-m K_X)
$$
for each prime divisor $E$ on a birational model $X'$. The definition is independent of the choices made, and the subadditivity of the sequence $\ord_E\cO_X(-mK_X)$ shows that $A_X^{(m)}$ converges pointwise to a function $A_X:\DVal_X\to\R$, the \emph{log discrepancy function}, with $A_X=\sup_{m\ge 1}A_X^{(m)}$. 

\subsection{The log discrepancy of a real valuation}

\begin{thm}\label{thm:extend} There is a unique way to extend $A_X$ and $A_X^{(m)}$ $(m\ge 1)$ to homogeneous, lower semicontinuous functions $\Val_X\to\R\cup\{+\infty\}$ such that the following properties hold for each normalizing subscheme $N\subset X$: 
\begin{itemize}
\item[(i)] $A_X^{(m)}$ and $A_X$ are convex and continuous on all faces of $\Val_X^N$, and $A_X^{(m)}$ is even affine on small faces;
\item[(ii)] on $\Val_X^N$ we have
$$
A_X^{(m)}=\sup_\pi A_X^{(m)}\circ r_\pi^N\quad\text{  and  }\quad A_X=\lim_\pi A_X\circ r_\pi^N,
$$ 
where $\pi$ runs over all good resolutions of $N$ and $r_\pi^N:\Val_X^N\to\D_\p^N$ is the corresponding retraction;
\item[(iii)] for each $a\in\R$, $\left\{A_X^{(1)}\le a\right\}\cap\Val_X^N$ is compact, and $A_X^{(m)}$ converges uniformly to $A_X$ on this set.  
\end{itemize}
Further, we have $A_X=\sup_{m\ge 1}A_X^{(m)}$ on $\Val_X$. 
\end{thm}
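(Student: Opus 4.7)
The plan is to build the extensions layer by layer: first define $A_X^{(m)}$ on quasi-monomial valuations via affine interpolation on a sufficiently fine good resolution, then pass to all of $\Val_X^N$ by a sup formula, and finally set $A_X := \sup_m A_X^{(m)}$.

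Fix $m \ge 1$ and a normalizing subscheme $N$. Choose a good resolution $\pi_m$ of $N$ that also dominates the blow-up of the fractional ideal sheaf $\cO_X(-mK_X)$. On such a $\pi_m$, the formula $A_X^{(m)}(\ord_E) = 1 + \ord_E(K_{X_{\pi_m}}) - m^{-1}\ord_E\cO_X(-mK_X)$ is linear in $\ord_E$ among exceptional divisors, so for a quasi-monomial valuation $\nu_w \in \D_{\pi_m}^N$ I set
$$
A_X^{(m)}(\nu_w) := \sum_j w_j A_X^{(m)}(\ord_{E_j}),
$$
which is affine on each face of $\D_{\pi_m}^N$ and, thanks to the normalization of vertices, matches the prescribed value there. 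Independence of the choice of $\pi_m$ is verified on a common refinement, using that both $K_{X_\pi}$ and $\cO_X(-mK_X)$ pull back compatibly. Then extend to all $\nu \in \Val_X^N$ by $A_X^{(m)}(\nu) := \sup_{\pi \ge \pi_m} A_X^{(m)}(r_\pi^N(\nu))$. This net is monotone increasing by combining the concavity in Proposition~\ref{p:log a} (applied to $\cO_X(-mK_X)$) with the affine behaviour of the $K_{X_\pi}$-contribution along refinements. Lower semicontinuity follows as a sup of continuous functions; affineness on small faces is built in; convexity and continuity on all faces then come from piecewise subdivision plus the standard fact, invoked in the proof of Proposition~\ref{p:log a-bullet}, that a convex lsc function on a simplex is automatically continuous.

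Setting $A_X := \sup_m A_X^{(m)}$, the monotonicity $A_X^{(m)} \le A_X^{(m+1)}$, and hence pointwise convergence $A_X^{(m)} \to A_X$, follow from the subadditivity of $\ord_E\cO_X(-mK_X)$ in $m$. Lower semicontinuity, convexity, and continuity on faces are inherited via the same convex-analysis argument. Property~(ii) for $A_X$ then reduces to combining the sup formulas for each $A_X^{(m)}$ with $A_X = \sup_m A_X^{(m)}$, using that the net $A_X \circ r_\pi^N$ is also monotone increasing in $\pi$.

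The main obstacle is property~(iii). Compactness of $\{A_X^{(1)} \le a\} \cap \Val_X^N$ should be obtained by adapting~\cite[\S 5]{JM10}: $A_X^{(1)}$ differs from the smooth-case log discrepancy (on a resolution dominating the blow-up of $\cO_X(-K_X)$) only by the $F_1$-contribution, uniformly bounded on $\Val_X^N$ by Proposition~\ref{p:log a}. Uniform convergence $A_X^{(m)} \to A_X$ on this compact set is delicate, since $A_X$ is only lower semicontinuous in general, precluding a direct appeal to Dini's lemma; instead I would use the uniform Lipschitz estimate of Remark~\ref{rem:Lip} applied to the sheaves $\cO_X(-mK_X)$, combined with the monotone convergence of the corresponding normalized sequence to the graded-sequence limit, to reduce to an equicontinuity argument on the compact sublevel set.
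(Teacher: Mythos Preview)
Your direct construction of $A_X^{(m)}$ by affine interpolation and sup over retractions is reasonable in outline, and once (ii) is established for each $A_X^{(m)}$, your swap-of-sups argument for (ii) for $A_X$ is fine. One small slip: subadditivity of $m\mapsto\ord_E\cO_X(-mK_X)$ gives $A_X^{(m)}\le A_X^{(km)}$ for each $k\ge 1$, but not $A_X^{(m)}\le A_X^{(m+1)}$; this does not affect $A_X=\sup_m A_X^{(m)}$, but you should not claim more.

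The genuine gap is in (iii), and your proposed route via Remark~\ref{rem:Lip} does not close it. That remark gives a Lipschitz bound only on a single face $\sigma$, with a constant $C$ that depends on $\sigma$, and only under the hypothesis that the common center of the valuations in the interior of $\sigma$ is proper over $\C$. The compact sublevel set $K=\{A_X^{(1)}\le a\}\cap\Val_X^N$ is not a face and typically meets faces of unbounded depth; there is no uniform Lipschitz constant available, no natural metric on $K$ in which to phrase equicontinuity, and no mechanism for upgrading face-wise estimates to the global statement you need.

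The paper avoids this entirely by reducing to the smooth case. One fixes once and for all a resolution $\mu:X'\to X$ and a $\mu$-exceptional effective divisor $D$ chosen so that
\[
\fa_m:=\mu^{-1}\cO_X(-mK_X)\cdot\cO_{X'}(mK_{X'})\cdot\cO_{X'}(-mD)
\]
is a graded sequence of \emph{bona fide} ideal sheaves on the smooth variety $X'$, and then uses the exact identity $A_X^{(m)}(\nu)=A_{X'}(\nu)+\nu(D)-m^{-1}\nu(\fa_m)$ to \emph{define} the extensions on all of $\Val_X$. Compactness of sublevel sets and the uniform convergence $m^{-1}\nu(\fa_m)\to\nu(\fa_\bullet)$ on them are then imported wholesale from the smooth theory (Lemma~\ref{lem:JM}, \ie \cite{JM10}). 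The uniform convergence there ultimately rests on Dini together with the \emph{continuity} of $\nu\mapsto\nu(\fa_\bullet)$ on sublevel sets of $A_{X'}$, which is proved using the superadditivity of multiplier ideals on smooth varieties. This is precisely the ingredient your approach never invokes, and without it I do not see how to obtain (iii).
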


\begin{rmk}
Combined with Remark~\ref{rem:Lip}, our proof will show that $A_X$ is in fact Lipschitz continuous on any face of $\Val_X^N$ containing valuations with proper center in $X$.
\end{rmk}

\begin{rmk}
We do not know whether $A_X \ge A_X \o r_\p^N$ holds on $\Val_X^N$ for $\p$ large enough in general.
\end{rmk}
Theorem \ref{thm:extend} will be proved by reduction to the smooth case. The next result summarizes the required properties for $X$ smooth, all of which are contained in \cite{JM10}. 

\begin{lem}\label{lem:JM} Assume that $X$ is smooth. Then Theorem \ref{thm:extend} holds; further, if $\fa_\bullet$ is a graded sequence of ideal sheaves on $X$ and $N$ is a normalizing subscheme containing the zero locus of $\fa_1$ (and hence of $\fa_m$ for all $m$), $m^{-1}\n(\fa_m)\to\nu(\fa_\bullet)$ uniformly for $\n\in\{A_X\le a\}\cap\Val_X^N$, for each $a\in\R$. 
\end{lem}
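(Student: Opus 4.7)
My plan is to reduce everything to results in \cite{JM10, JM12}, proceeding in three stages: first collapse the family $\{A_X^{(m)}\}$ to the classical log discrepancy using smoothness; next extend that common function to $\Val_X$ by affine interpolation on dual complexes followed by a supremum over good resolutions; and finally obtain the uniform convergence on sublevel sets via Dini's lemma.

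Since $K_X$ is Cartier on a smooth $X$, the sheaf $\cO_X(-mK_X)$ is invertible and $m^{-1}\ord_E\cO_X(-mK_X)=-\ord_E(\pi^*K_X)$ for every prime divisor $E$ on any proper birational model $\pi:X'\to X$. Consequently $A_X^{(m)}(\ord_E)=1+\ord_E(K_{X'/X})$ is independent of $m$ and coincides with both $A_X$ and the classical log discrepancy of $\ord_E$. In particular the entire family $\{A_X^{(m)}\}$ agrees with $A_X$ on $\DVal_X$, so the uniform convergence asserted in (iii) is automatic once $A_X$ has been extended appropriately.

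For the extension, I would follow the construction in \cite{JM10}: on each $\Delta_\pi^N$, declare $A_X$ to be affine on each face with vertex values $1+\ord_{E_j}(K_{X_\pi/X})$. Using the adjunction identity $K_{X_{\pi'}/X_\pi}=K_{X_{\pi'}/X}-\pi'^*K_{X_\pi/X}$ together with the effectivity of $K_{X_{\pi'}/X_\pi}$ (for which the smoothness of $X_\pi$ is essential), one checks that $A_X\circ r_\pi^N$ is increasing in $\pi$ on $\Val_X^N$. Setting $A_X(\nu):=\sup_\pi A_X(r_\pi^N(\nu))$ on $\Val_X^N$ and extending homogeneously then produces a well-defined, lower semicontinuous, convex function on $\Val_X$, and properties (i) and (ii) hold by construction. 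The compactness statement for $\{A_X\le a\}\cap\Val_X^N$ in (iii) is the main non-formal input, proved in \cite{JM10} by a careful analysis of how log discrepancies transform under successive blow-ups of the center of a valuation; I expect this properness to be the main obstacle if one were to reprove the lemma from scratch, as the rest of the structure is formal.

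For the uniform convergence $m^{-1}\nu(\fa_m)\to\nu(\fa_\bullet)$ on $K:=\{A_X\le a\}\cap\Val_X^N$, each $\nu\mapsto m^{-1}\nu(\fa_m)$ is continuous on $\Val_X^N$ by Proposition \ref{p:log a}, the sequence decreases monotonically by subadditivity, and its pointwise limit is $\nu(\fa_\bullet)$. The continuity of $\nu\mapsto\nu(\fa_\bullet)$ on the compact set $K$ is provided by \cite{JM12}. Dini's lemma applied to a monotonically decreasing sequence of continuous functions on a compact set with continuous limit then yields uniform convergence on $K$, completing the proof.
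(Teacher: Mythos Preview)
Your approach is essentially the same as the paper's: reduce properties (i) and (ii) to \cite{JM10}, get compactness of sublevel sets from \cite{JM10} as well, and deduce uniform convergence from continuity of $\nu\mapsto\nu(\fa_\bullet)$ on the compact set via Dini. One small inaccuracy: the sequence $m^{-1}\nu(\fa_m)$ is not literally monotonically decreasing in $m$; subadditivity only gives monotonicity along divisibility (e.g.\ $(mn)^{-1}\nu(\fa_{mn})\le m^{-1}\nu(\fa_m)$), so Dini applies to a cofinal subsequence such as $m!$ or $2^k$, and one then handles general $m$ by the standard subadditive trick $\nu(\fa_m)\le q\,\nu(\fa_M)+\nu(\fa_r)$ for $m=qM+r$. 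The paper likewise phrases this step as ``by Dini's lemma and the subadditivity of $(\nu(\fa_m))$'', and attributes the continuity of $\nu\mapsto\nu(\fa_\bullet)$ to \cite[Section~6.1]{JM10} (via the superadditivity of multiplier ideals) rather than \cite{JM12}, but this is not a substantive difference.
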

\begin{proof} When $X$ is smooth, properties (i) and (ii) of Theorem \ref{thm:extend} follow from \cite[Proposition 5.1, Corollary 5.8]{JM10}. The compactness of $\{A_X\le a\}\cap\Val_X^N$ is a consequence of the Skoda-Izumi inequality, just as in the proof of \cite[Proposition 5.9]{JM10}. By Dini's lemma and the subadditivity of $(\n(\fa_m))_{m\in\N}$, the uniform convergence is equivalent to the continuity of $\nu\mapsto\nu(\fa_\bullet)$ on $\{A_X\le a\}\cap\Val_X^N$. This is proved exactly as in \cite[Section 6.1]{JM10} (itself building on \cite[Theorem 3.9]{BFJ08}), by exploiting the superadditivity property of the sequence of (classical) multiplier ideals $\cJ(X,\fa_\bullet^m)$ $(m\in\N)$ on the smooth variety $X$.  
\end{proof}

\begin{proof}[Proof of Theorem \ref{thm:extend}] Uniqueness is clear: since the rational points of each dual complex $\D_\p^N$ consist of divisorial valuations, $A_X$ and $A_X^{(m)}$ are uniquely determined on $\D_\p^N$ by (i), and hence on $\Val_X^N$ by (ii). By homogeneity, they are also uniquely determined on 
$$
\Val_X=\bigcup_N\R_+^*\cdot\Val_X^N.
$$
In order to prove existence, we fix the choice of a projective birational morphism $\mu:X'\to X$ such that $X'$ is smooth and $\mu$ is an isomorphism over $X_\reg$. 

We claim that there exists a $\mu$-exceptional effective divisor $D$ on $X'$ such that the graded sequence of fractional ideal sheaves
$$
\fa_m := \mu^{-1}\O_X(-mK_X)\cdot\O_{X'}(mK_{X'})\cdot\cO_{X'}(-mD)
$$
is a sequence of actual ideal sheaves. To see this, it is enough to choose $D$ such that $\fa_1\subset\cO_{X'}$. But we may add a Cartier divisor $Z$ to $K_X$ so that $K_X + Z$ is effective.
The divisorial part of the ideal sheaf
$$
\m^{-1 }\O_X (-K_X -Z)\cdot\cO_{X'}
$$ 
coincides with $\O_{X'}(- K_{X'} -\m^* Z)$ up to a $\m$-exceptional divisor $D$, and we get $\fa_1\subset\cO_{X'}$ as desired for this choice of $D$. 

Note that we have by definition
$$
A_X^{(m)}(\n)=A_{X'}(\n)+\n(D)-m^{-1}\n(\fa_m)
$$
and 
$$
A_X(\n)=A_{X'}(\n)+\n(D)-\nu(\fa_\bullet)
$$
for all $\n\in\DVal_X\simeq\DVal_{X'}$. Using the canonical homeomorphism $\Val_X\simeq\Val_{X'}$, we can now use these formulas to \emph{define} $A_X$ and $A_X^{(m)}$ on $\Val_X$. Propositions \ref{p:log a} and \ref{p:log a-bullet} already show that $A_X$ and $A_X^{(m)}$ are homogeneous and lsc on $\Val_X$.  It remains to see that they satisfy (i), (ii) and (iii) of Theorem \ref{thm:extend}. 

Let $N\subset X$ be a given normalizing subscheme. Each good resolution $\pi':X'_{\pi'}\to X'$ of $N':=\mu^{-1}(N)$ induces a good resolution $\pi:=\mu\circ\pi'$ of $N$ such that $\D_{\pi'}^{N'}=\D_\p^N$, and the retractions $r_\pi^N:\Val_X^N\to\D_\p^Ni^N$ and $r_{\pi'}^{N'}:\Val_{X'}^{N'}\to\D_{\pi'}^{N'}$ identify modulo the canonical homeomorphism $\Val_{X'}\simeq\Val_X$.

Since $N'$ contains the support of $D$ and the zero locus of $\fa_1$, Proposition \ref{p:log a} shows that $\nu\mapsto\nu(D)-\nu(\fa_\bullet)$ is bounded and lower semicontinuous on $\Val_{X'}^{N'}$, and continuous and convex on the faces of $\Val_{X'}^{N'}$, while $\nu\mapsto\n(D)-m^{-1}\n(\fa_m)$ is affine on small faces. It follows that $A_X$ and $A^{(m)}_{X}$ satisfy (i). 

Now pick $a\in\R$, and set for simplicity
$$
K:=\{A_X^{(1)}\le a\}\cap\Val_X^N.
$$
Since $\nu(D)-\nu(\fa_1)$ is bounded for $\n\in\Val_X^N$, $K$ is contained in $\{A_{X'}\le a'\}\cap\Val_{X'}^{N'}$ for some $a'\in\R$, and hence is compact by Lemma \ref{lem:JM} and the lower semicontinuity of $A_X^{(1)}$. Lemma \ref{lem:JM} also shows that $m^{-1}\nu(\fa_m)\to\nu(\fa_\bullet)$ uniformly for $\n\in K$, which proves (iii).  

Let $\nu\in \Val_X^N$. If $A_X(\n)$ is finite, then so is $A_{X'}(\n)$, and $r_{\pi'}^{N'}(\nu)$ stays in the compact set
$$
\{A_{X'}\le A_{X'}(\n)\}\cap\Val_{X'}^{N'}
$$ 
since $A_{X'}\ge A_{X'}\circ r_{\p'}^{N'}$. For each $m$ fixed we have by Proposition \ref{p:log a}
$$
\nu(\fa_m)=r_{\p'}^{N'}(\nu)(\fa_m)
$$
for $\p'$ large enough, which proves that $A^{(m)}_X$ satisfies (ii). By uniform convergence of $m^{-1}\nu'(\fa_m)$ to $\nu'(\fa_\bullet)$ for $\nu'\in\{A_{X'}\le A_{X'}(\n)\}\cap\Val_{X'}^{N'}$, we infer
$$
\nu(\fa_\bullet)=\lim_{\pi'}r_{\pi'}^{N'}(\nu)(\fa_\bullet), 
$$
so that $A_X$ satisfies (ii) on the locus of $\Val_X^N$ where it is finite. If now $\nu\in \Val_X^N$ has $A_{X}(\nu)=+\infty$, then 
$$
\lim_{\pi'}A_{X'}(r_{\pi'}^{N'}(\nu))=A_{X'}(\nu)=+\infty,
$$
while $r_{\pi'}^{N'}(\nu)(\fa_\bullet)$ remains bounded, and we thus get (ii) at $\nu$ as well. The same argument also proves the last assertion of Theorem \ref{thm:extend}. 
\end{proof}

\section{Valuative characterization of multiplier ideals}
\label{s:mult-ideals}

\subsection{Multiplier ideal sheaves}
We briefly recall the definition of multiplier ideals in the context of a normal variety, as introduced in~\cite{dFH09}. We follow the presentation of \cite[Section~3]{BdFF}, which is phrased in the language of $b$-divisors, and therefore closer to our present valuative point of view. Indeed, it suffices to recall that a $b$-divisor is nothing but a homogeneous function on $\DVal_X$, with the extra property that it is non-zero on only finitely many prime divisors of $X$. 

If $\fa\subset\cO_X$ is a coherent ideal sheaf and $c$ is a positive real number, the \emph{$m$-limiting multiplier ideal sheaf} of $\fa^c$ is defined as
$$
\J_m(X,\fa^c):=\left\{f\in\cO_X\mid\n(f)>c\n(\fa)-A_X^{(m)}(\n)\text{ for all }\n\in\DVal_X\right\},
$$
where $A_X^{(m)}$ is the $m$-limiting log discrepancy function from Section \ref{s:log discr}. This is a reformulation of \cite[Definition 3.7]{BdFF}, which is phrased in the equivalent language of $b$-divisors. It is proved in \emph{op.cit.} that $\cJ_m(X,\fa^c)$ is actually coherent. 

We have $\J_m(X,\fa^c) \subset \J_l(X,\fa^c)$ whenever $m$ divides $l$, and the \emph{multiplier ideal sheaf} $\J(X,\fa^c)$ can thus be defined as the unique maximal element of the family $(\J_m(X,\fa^c))_{m\in\N }$. By \cite[Theorem 3.8]{BdFF}, $\cJ(X,\fa^c)$ is also the largest element in the family of 'classical' multiplier ideals $\cJ((X,\D);\fa^c)$, where $\D$ runs over all effective $\Q$-Weil divisors on $X$ such that $K_X+\D$ is $\Q$-Cartier (so that $(X,\D)$ is a pair in the sense of Mori theory).

More generally, when $\fa_\bullet=(\fa_m)_{m\in\N}$ is a graded sequence of (coherent) ideal sheaves, the multiplier ideal $\J(X,\fa_\bullet^c)$ is defined as the maximal element of the family $\J(X,\fa_m^{c/m})$ \cite[Definition 3.12]{BdFF}. Equivalently, $\cJ(X,\fa_\bullet^c)$ is also the largest element in the family $\cJ_m\left(X,\fa_m^{c/m}\right)$, cf. \cite[Lemma 3.13]{BdFF}.

\subsection{Valuative characterization}

Theorems~\ref{t:1} and \ref{t:2} from the introduction are restated together in the following result. 
\begin{thm}\label{t:mult-ideal}
Let $X$ be a normal variety. Let $\fa_\bullet$ be a graded sequence of (coherent) ideal sheaves on $X$, 
and let $c > 0$ be a real number. Then the following two characterizations
of the multiplier ideal sheaf $\J(X,\fa_\bullet^c)\subset\cO_X$ hold:
\begin{enumerate}
\item
$$
\J(X,\fa_\bullet^c)= \left\{f\in\cO_X \mid\nu(f)>c\nu(\fa_\bullet)-A_X(\n)\text{ for all }\n\in\Val_X\right\}.
$$
\item
For every normalizing subscheme $N\subset X$ containing the zero locus of $\fa_1$ (and hence of $\fa_m$ for all $m$) and every $0<\ep\ll 1$ we have 
$$
\J(X,\fa_\bullet^c)=\left\{f\in\cO_X\mid\n(f)\ge c\n(\fa_\bullet)-A_X(\n)+\ep\n(\cI_N)\text{ for all }\n\in\DVal_X\right\}. 
$$
\end{enumerate}
\end{thm}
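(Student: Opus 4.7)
The strategy is to prove the cycle of inclusions $\cJ(X,\fa_\bullet^c) \subset \text{(a)} \subset \text{(b)} \subset \cJ(X,\fa_\bullet^c)$, where (a) and (b) denote the right-hand sides of the two asserted characterizations. By homogeneity of $A_X$ and of $\nu \mapsto \nu(f), \nu(\fa_\bullet), \nu(\cI_N)$, all inequalities reduce to the normalized slice $\Val_X^N$, with $N$ enlarged if necessary to contain $V(f)$.

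For $\cJ \subset \text{(a)}$, fix $m$ with $f \in \cJ_m(X,\fa_m^{c/m})$ and, given $\nu \in \Val_X^N$, consider its retraction $\mu_\pi := r_\pi^N(\nu)$ onto a face $\sigma_\pi$ of $\Delta_\pi^N$, with $\pi$ large enough that $\pi$ dominates the blow-up of $\fa_m$ and $A_X^{(m)}$ is affine on $\sigma_\pi$ (Theorem~\ref{thm:extend}(i)). Then
$$
\varphi_m(\mu) := \mu(f) - (c/m)\mu(\fa_m) + A_X^{(m)}(\mu)
$$
is affine on the simplex $\sigma_\pi$ by Proposition~\ref{p:log a}; since $\varphi_m > 0$ at the vertices $\ord_{E_j}$ (which are divisorial) by the defining inequality of $\cJ_m$, it is strictly positive on all of $\sigma_\pi$. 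Applying this to $\mu_\pi$ and combining with $\mu_\pi(f) = \nu(f)$, $\mu_\pi(\fa_m) = \nu(\fa_m)$ (Proposition~\ref{p:log a}) and $A_X^{(m)}(\mu_\pi) \leq A_X^{(m)}(\nu)$ (Theorem~\ref{thm:extend}(ii)) yields $\nu(f) > (c/m)\nu(\fa_m) - A_X^{(m)}(\nu) \geq c\nu(\fa_\bullet) - A_X(\nu)$. For $\text{(a)} \subset \text{(b)}$, the function $\varphi(\nu) := \nu(f) + A_X(\nu) - c\nu(\fa_\bullet)$ is lower semicontinuous and strictly positive on $\Val_X^N$; on the compact set $K_a := \{A_X^{(1)} \leq a\} \cap \Val_X^N$ (compact by Theorem~\ref{thm:extend}(iii)) it attains a positive minimum $\delta$, while outside $\varphi \geq A_X^{(1)} - c\,\sup_{\Val_X^N}\nu(\fa_\bullet) > a - cM$ with $M < \infty$ by Proposition~\ref{p:log a-bullet}, so choosing $a$ large yields a uniform $\epsilon > 0$ with $\varphi \geq \epsilon$ on $\Val_X^N$.

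The main work is $\text{(b)} \subset \cJ$: it suffices to find $m$ such that
$$
g_m(\nu) := c\bigl(m^{-1}\nu(\fa_m) - \nu(\fa_\bullet)\bigr) + \bigl(A_X(\nu) - A_X^{(m)}(\nu)\bigr) < \epsilon
$$
for every divisorial $\nu \in \Val_X^N$, since the hypothesis then forces $\nu(f) > (c/m)\nu(\fa_m) - A_X^{(m)}(\nu)$, i.e., $f \in \cJ_m(X,\fa_m^{c/m}) \subset \cJ$. Choose $a > c\,\sup_{\Val_X^N}\nu(\fa_1)$: on $\{A_X^{(1)} > a\} \cap \Val_X^N$, the subadditivity inclusions $\cO_X(-K_X)^m \subset \cO_X(-mK_X)$ and $\fa_1^m \subset \fa_m$ give $A_X^{(m)}(\nu) \geq A_X^{(1)}(\nu) > a \geq c\nu(\fa_1) \geq (c/m)\nu(\fa_m)$, so the target strict inequality holds unconditionally. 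On $K_a$, Theorem~\ref{thm:extend}(iii) yields the uniform convergence $A_X^{(m)} \to A_X$, while the uniform convergence $m^{-1}\nu(\fa_m) \to \nu(\fa_\bullet)$ is obtained by passing to the smooth resolution $\mu : X' \to X$ from the proof of Theorem~\ref{thm:extend}, with its auxiliary sequence $\fb_\bullet$ and divisor $D$, and a normalizing subscheme $N' \subset X'$ containing $\mu^{-1}(N) \cup V(\fb_1) \cup \Supp(D)$: since $\nu(\cI_{N'})$ is bounded on $K_a$ by Proposition~\ref{p:log a}, rescaling by this factor transports $K_a$ into a compact subset of $\{A_{X'} \leq a'\} \cap \Val_{X'}^{N'}$, where Lemma~\ref{lem:JM} applied to $\mu^{-1}\fa_\bullet \cdot \cO_{X'}$ delivers the required uniform convergence.

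The main obstacle is precisely this last uniform convergence statement: the Dini-type Lemma~\ref{lem:JM} is only available for smooth varieties, so the argument must be transported from the compact slice $K_a \subset \Val_X^N$ on the singular variety $X$ to a compact subset of $\Val_{X'}^{N'}$ on the smooth resolution, despite the two normalizations differing by the variable (but bounded) factor $\nu(\cI_{N'})$.
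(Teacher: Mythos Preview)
Your proof is correct and follows the same overall strategy as the paper: reduce to the normalized slice $\Val_X^N$, exploit the compactness of $\{A_X^{(1)}\le a\}\cap\Val_X^N$ from Theorem~\ref{thm:extend}(iii), and pass from the asymptotic inequality to a finite-level one. The organization of the final step differs, however, and the paper's version is cleaner. For $(b)\subset\cJ$ you split $g_m$ into its two pieces and prove uniform convergence of each on $K_a$: for $A_X^{(m)}\to A_X$ you quote Theorem~\ref{thm:extend}(iii), and for $m^{-1}\nu(\fa_m)\to\nu(\fa_\bullet)$ you transport to the smooth model $X'$ and invoke Lemma~\ref{lem:JM}. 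The paper instead applies the Dini-type Lemma~\ref{lem:dini} once to the combined function
\[
\phi_m(\nu)=\nu(f)-cm^{-1}\nu(\fa_m)+A_X^{(m)}(\nu),
\]
which is nondecreasing along divisibility, lower semicontinuous, and has compact sublevel sets on $\Val_U^N$ (since $\{\phi_1\le a\}\subset\{A_X^{(1)}\le a+cM\}$). This yields $\inf\phi_m\to\inf\phi>0$ in one stroke and never requires isolating the convergence of $m^{-1}\nu(\fa_m)$ or transporting it to $X'$---the step you flag as the main obstacle simply disappears.

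Two minor points: your rescaling by $\nu(\cI_{N'})$ is unnecessary, since with $N'=\mu^{-1}(N)$ one has $\cI_{N'}=\mu^{-1}\cI_N\cdot\cO_{X'}$ and hence $\Val_X^N=\Val_{X'}^{N'}$ on the nose; and the reduction ``by homogeneity'' to $\Val_X^N$ should be accompanied by the observation (made explicit in the paper) that for $\nu$ centered outside $N\supset\Sing(X)\cup V(\fa_1)$ one has $\nu(\fa_\bullet)=0$ and $A_X(\nu)>0$, so all the inequalities hold trivially there.
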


A key ingredient in the proof is the following simple variant of Dini's lemma. 
\begin{lem}\label{lem:dini} Let $Z$ be a Hausdorff topological space, and let $\phi_m:Z\to\R\cup\{+\infty\}$ be a non-decreasing sequence of lower semicontinuous functions converging to $\phi$. Assume also that each sublevel set $\left\{\phi_1\le a\right\}$ with $a\in\R$ is compact. Then $\inf_Z\phi_m\to\inf_Z\phi$.
\end{lem}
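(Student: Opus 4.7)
The plan is to show the non-trivial inequality $\inf_Z\phi\le\lim_m\inf_Z\phi_m$; the reverse is immediate since the hypothesis $\phi_m\le\phi_{m+1}\le\cdots\le\phi$ gives $\inf_Z\phi_m\le\inf_Z\phi$ for every $m$, so the monotone limit $L:=\lim_m\inf_Z\phi_m=\sup_m\inf_Z\phi_m$ satisfies $L\le\inf_Z\phi$.

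If $L=+\infty$ there is nothing to prove, so assume $L<+\infty$ and fix an arbitrary real number $L'>L$. First I would introduce the sublevel sets
\[
K_m:=\{z\in Z\mid\phi_m(z)\le L'\},
\]
and check three properties: \emph{(a)} each $K_m$ is non-empty, because $\inf_Z\phi_m\le L<L'$; \emph{(b)} each $K_m$ is closed in $Z$, since $\phi_m$ is lower semicontinuous; \emph{(c)} the sequence is decreasing, $K_{m+1}\subset K_m$, because $\phi_{m+1}\ge\phi_m$. Moreover, the inequality $\phi_1\le\phi_m$ forces $K_m\subset K_1=\{\phi_1\le L'\}$, which is compact by hypothesis. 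Consequently each $K_m$ is a non-empty closed subset of a Hausdorff compact space and the family $(K_m)$ has the finite intersection property.

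By compactness of $K_1$ the intersection $\bigcap_{m\ge 1}K_m$ is therefore non-empty; pick any $z_\infty$ in it. Then $\phi_m(z_\infty)\le L'$ for all $m$, and passing to the (monotone) limit yields $\phi(z_\infty)=\sup_m\phi_m(z_\infty)\le L'$, whence $\inf_Z\phi\le L'$. Letting $L'\downarrow L$ gives the desired inequality $\inf_Z\phi\le L$, completing the proof.

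The only subtle point is the use of the Hausdorff and compactness hypotheses to guarantee that the nested intersection of closed sets is non-empty; everything else is a direct unravelling of lower semicontinuity and monotone convergence. No deeper obstacle is expected.
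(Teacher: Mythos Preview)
Your proof is correct and uses essentially the same compactness argument as the paper: nested closed sublevel sets inside the compact set $\{\phi_1\le a\}$. The only organizational difference is that the paper takes the level $\inf_Z\phi-\ep$ and shows the resulting sets have empty intersection (hence are eventually empty), whereas you take the level $L'>L$ and show the sets are all non-empty (hence have non-empty intersection, producing a point where $\phi\le L'$); these are dual phrasings of the same finite intersection property, and your case split on $L$ rather than on $\inf_Z\phi$ lets you dispose of the $+\infty$ case a touch more quickly.
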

\begin{proof} Assume first that $\inf_Z\phi<+\infty$ (which is the only case we shall actually use), and let $\ep>0$. Setting
$$
K_m:=\left\{\phi_m\le\inf_Z\phi-\ep\right\}
$$ 
defines a decreasing sequence of compact sets, by lower semicontinuity of $\phi_m$ and the compactness of sublevel sets of $\phi_1$. Since
$$
\bigcap_{m\in\N}K_m=\left\{\phi\le\inf_Z\phi-\ep\right\}=\emptyset,
$$
it follows that $K_m=\emptyset$ for all $m\gg 1$, \ie $\inf_Z\phi_m>\inf_Z\phi-\ep$ for all $m\gg 1$. 
In case $\phi\equiv+\infty$, the same argument applies, fixing $A>0$ instead of $\ep$ and replacing $K_m$ with $K_m':=\left\{\phi_m\le A\right\}$.
\end{proof}

\begin{proof}[Proof of Theorem \ref{t:mult-ideal}]
Let $N\subset X$ be a normalizing subscheme containing the zero locus of $\fa_1$. Let also $U\subset X$ be an affine open set and pick $f\in\cO(U)$. The theorem will follow from the equivalence between the following properties:
\begin{itemize}
\item[(i)] $f\in\cJ(X,\fa_\bullet^c)(U)$.
\item[(ii)] $\n(f)>c m^{-1}\n(\fa_m)-A_X^{(m)}(\n)$ on $\DVal_U^N$ for all $m$ large and divisible.
\item[(iii)] $\n(f)>c m^{-1}\n(\fa_m)-A_X^{(m)}(\n)$ on $\Val_U^N$ for all $m$ large and divisible.
\item[(iv)] $\n(f)>c\n(\fa_\bullet)-A_X(\n)$ on $\Val_U^N$.
\item[(v)] $\n(f)\ge c\n(\fa_\bullet)-A_X(\n)+\ep$ on $\Val_U^N$ for some $0<\ep\ll 1$.
\item[(vi)] $\n(f)\ge c\n(\fa_\bullet)-A_X(\n)+\ep$ on $\DVal_U^N$ for some $0<\ep\ll 1$.
\item[(vii)] $\n(f)\ge c\n(\fa_\bullet)-A_X(\n)+\ep\n(\I_N)$ on $\DVal_U$ for some $0<\ep\ll 1$. 
\end{itemize}
Let us first check (i)$\Longleftrightarrow$(ii). Since $U$ is affine, $\cJ(X,\fa_\bullet^c)(U)$ is the largest element in the family of ideals $\J_m(X,\fa_m^{c/m})(U)$ of $\cO(U)$, and (i) thus amounts to $\n(f)>c m^{-1}\n(\fa_m)-A_X^{(m)}(\n)$ on $\DVal_U$ for all $m$ large and divisible, which implies (ii). Conversely, (ii) implies (i) since for any $\nu\in\DVal_U$ centered outside $N\supset\Sing(X)$ we have $A_X^{(m)}(\n)=A_X(\n)>0$ (since $U$ is smooth at the center of $\n$) while $\nu(\fa_m)=0$. 

Next, consider the functions $\phi,\phi_m:\Val_U^N\to\R\cup\{+\infty\}$ defined by 
$$
\phi(\n):=\n(f)-c\n(\fa_\bullet)+A_X(\n)
$$
and
$$
\phi_m(\n):=\n(f)-c m^{-1}\n(\fa_m)+A_X^{(m)}(\n)
$$
For each $m$ fixed, Proposition \ref{p:log a} and Theorem \ref{thm:extend} show that $\phi_m$ is lower semicontinuous, affine on small faces of $\Val_U^N$ and satisfies $\phi_m\ge\phi_m\circ r_\p^N$ for all $\p$ large enough. This shows that $\phi_m>0$ on $\DVal_U^N$ iff $\phi_m>0$ on $\Val_U^N$, \ie (ii)$\Longleftrightarrow$(iii). 

Further, each sublevel set $\{\phi_1\le a\}$ with $a\in\R$ is compact by Theorem \ref{thm:extend}, and Lemma \ref{lem:dini} thus yields (iv)$\Rightarrow$(iii), while the converse follows from $A_X\ge A_X^{(m)}$.

Since $\phi$ is lower semicontinuous and has compact sublevel sets, it achieves its infimum on $\Val_U^N$, which proves (iv)$\Longleftrightarrow$(v). Next, (v) trivially implies (vi), while the converse holds since $\phi$ is continuous on each dual complex $\D_\p^N$ and satisfies $\phi=\lim_\p\phi\circ r_\pi^N$ on $\Val_U^N$, again by Theorem \ref{thm:extend}. 

As to (vi)$\Longleftrightarrow$(vii), it holds because $\n(\I_N)=-1$ on $\Val_U^N$ by definition of the latter, while we have just as above $\nu(f)-c\n(\fa_\bullet)+A_X(\n)-\ep\n(\I_N)\ge A_X(\n)\ge 0$ on any $\nu\in\DVal_U$ centered outside $N$. 

To get (b) in Theorem \ref{t:mult-ideal} from (vii), note that the ideals 
$$
\left\{f\in\cO(U)\mid\n(f)\ge c\nu(\fa_\bullet)-A_X(\n)+\ep\n(\I_N)\text{ on }\DVal_U\right\}
$$
are independent of $0<\ep\ll 1$, by the Noetherian property of $\cO(U)$.
\end{proof}

\section{Numerically Cartier divisors}

\subsection{The group of numerical divisor classes} 
In this section, we provide an alternative and more concrete approach to the notion of numerically Cartier divisors introduced in \cite[Section 2]{BdFF}. 

As a matter of notation, we respectively denote by $\Car(X)$ and $Z^1(X)$ the groups of Cartier and Weil divisors of a normal variety $X$. We define the \emph{local class group of $X$} as 
$$
\Cll(X):=Z^1(X)/\Car(X). 
$$
By definition, $\Cll(X)$ is trivial iff $X$ is (locally) factorial. Since the usual divisor class group $\Cl(X)$  is defined as the quotient of $Z^1(X)$ by the subgroup of principal divisors, we have an exact sequence 
$$
0\to\Pic(X)\to\Cl(X)\to\Cll(X)\to 0.
$$
\begin{rmk} When $X$ only has an isolated singularity at $0\in X$, $\Cll(X)$ coincides with the divisor class group of the local ring $\cO_{X,0}$. 
\end{rmk}

\begin{defi}\label{defi:numcar} Let $X$ be a normal variety.
\begin{itemize}
\item[(i)] A Weil divisor $D\in Z^1(X)$ is \emph{numerically Cartier} if there exists a resolution of singularities $\mu:X'\to X$ (\ie a projective birational morphism with $X'$ smooth) and a $\mu$-numerically trivial Cartier divisor $D'$ on $X'$ such that $D=\mu_*D'$. 
\item[(ii)] We denote by $\NC(X)\subset Z^1(X)$ the subgroup of numerically Cartier divisors, and elements of $\NC(X)_\Q\subset Z^1(X')_\Q$ are called \emph{numerically $\Q$-Cartier}. 
\item[(iii)] The \emph{group of numerical divisor classes} of $X$ is defined as the quotient 
$$
\Cln(X):=Z^1(X)/\NC(X).
$$
\item[(iv)] We say that $X$ is \emph{numerically factorial} (resp. \emph{numerically $\Q$-factorial}) if $\Cln(X)=0$ (resp. $\Cln(X)_\Q=0$). 
\end{itemize}
\end{defi}
By definition, $\Cln(X)$ is a quotient of $\Cll(X)$, and it is in fact much smaller in general. Indeed, as we shall shortly see, $\Cln(X)$ is always finitely generated as an abelian group. 

In order to further analyze numerically Cartier divisors, we first show that it is enough to work with a \emph{fixed} resolution of singularities.

\begin{prop}\label{prop:numinde} Let $\mu:X'\to X$ be projective birational morphism. 
\begin{itemize}
\item[(i)] If $X'$ is factorial, every $D\in\NC(X)$ writes $D=\mu_*D'$ for a unique $\mu$-numerically trivial $D'\in\Car(X')$. 
\item[(ii)] If $X'$ is $\Q$-factorial, every $D\in\NC(X)_\Q$ is of the form $D=\mu_*D'$ for a unique $\mu$-numerically trivial $D'\in\Car(X')_\Q$.
\end{itemize}
In both cases, we set $\mu^*_\num D:=D'$ and call it the \emph{numerical pull-back} of $D$. 
\end{prop}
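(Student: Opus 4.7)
The plan is to reduce both parts to the negativity lemma, using a common smooth resolution to transfer a numerically trivial lift from some auxiliary resolution $\nu \colon Y \to X$ (guaranteed by the definition of $\NC(X)$) to the prescribed model $X'$. For uniqueness, if $D = \mu_* D_1' = \mu_* D_2'$ with both $D_i'$ being $\mu$-numerically trivial and ($\Q$-)Cartier on $X'$, then $G := D_1' - D_2'$ is $\mu$-exceptional (its pushforward vanishes) and $\mu$-numerically trivial, so both $\pm G$ are $\mu$-nef; applying the negativity lemma to each forces $G = 0$. This settles uniqueness in both (i) and (ii) simultaneously.

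For existence, given $D \in \NC(X)$ (resp.\ $\NC(X)_\Q$), unwind the definition to obtain a resolution $\nu \colon Y \to X$ and a $\nu$-numerically trivial Cartier (resp.\ $\Q$-Cartier) divisor $\tilde D$ on $Y$ with $\nu_* \tilde D = D$. I would then choose a smooth variety $Z$ equipped with proper birational morphisms $p \colon Z \to X'$ and $q \colon Z \to Y$ satisfying $\mu \circ p = \nu \circ q$ (for instance by resolving the graph of $X' \dashrightarrow Y$ over $X$), and set $\tilde E := q^* \tilde D$; this is Cartier (resp.\ $\Q$-Cartier) on $Z$ and $(\mu \circ p)$-numerically trivial by the projection formula. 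The crucial observation is that every fiber of $p$ is contained in a fiber of $\mu \circ p$, so that $\tilde E$ is also $p$-numerically trivial. Now put $D' := p_* \tilde E$; by the factoriality (resp.\ $\Q$-factoriality) of $X'$ this is Cartier (resp.\ $\Q$-Cartier). The Cartier divisor $G := p^* D' - \tilde E$ on $Z$ is $p$-exceptional (its pushforward is zero) and $p$-numerically trivial, hence zero by the negativity lemma applied to $\pm G$, giving $p^* D' = \tilde E$. From here the two required properties follow routinely: the projection formula yields $\mu_* D' = (\mu \circ p)_* \tilde E = (\nu \circ q)_* q^* \tilde D = \nu_* \tilde D = D$; for $\mu$-numerical triviality, given any irreducible curve $C$ in a fiber of $\mu$, choose an irreducible curve $\tilde C \subset p^{-1}(C)$ that surjects onto $C$ (possible by slicing, with general hyperplane sections, an irreducible component of $p^{-1}(C)$ that dominates $C$), so that $\tilde C$ lies in a fiber of $\mu \circ p$; then $(\deg p|_{\tilde C}) \cdot (D' \cdot C) = p^* D' \cdot \tilde C = \tilde E \cdot \tilde C = 0$, whence $D' \cdot C = 0$.

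The main obstacle—and the only non-formal step—is the transfer from $Y$ to $X'$: the definition of $\NC(X)$ only guarantees a lift on \emph{some} resolution, while the statement demands a lift on the fixed ($\Q$-)factorial model $X'$. The bridge is the common resolution $Z$ coupled with the negativity lemma, in combination with the elementary but decisive implication that $(\mu \circ p)$-numerical triviality of $\tilde E$ forces its $p$-numerical triviality. This is exactly what makes the descent $\tilde E = p^* D'$ rigid and rules out any ambiguity in defining $\mu^*_\num D := D'$.
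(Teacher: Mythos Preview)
Your proof is correct and follows essentially the same approach as the paper's: uniqueness via the negativity lemma applied to a $\mu$-exceptional, $\mu$-numerically trivial difference, and existence by passing to a common model above both the auxiliary resolution and $X'$, pushing down, and invoking the negativity lemma again to identify the pullback. The paper is slightly terser---it simply replaces the resolution furnished by the definition with one dominating $X'$ (your $Z$ and $p$ play exactly this role) and appeals to ``the projection formula'' for $\mu$-numerical triviality where you spell out the curve-lifting explicitly---but the argument is the same.
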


\begin{proof} The kernel of $\mu_*:Z^1(X')\to Z^1(X)$ is exactly the space of $\mu$-exceptional divisors. By the negativity lemma, there is no non-trivial divisor on $X'$ that is both $\mu$-numerically trivial and $\mu$-exceptional, which proves the uniqueness part in both cases. 

Now pick $D\in\NC(X)$. By definition, there exists a resolution $\mu'':X''\to X$ such that $D=\mu''_*D''$ for some $\mu''$-numerically trivial $D''\in\Car(X'')$. Since the pull-back of $D''$ to a higher resolution remains relatively numerically trivial, we may assume that $\mu''$ dominates $\mu$, \ie $\mu''=\mu\circ\rho$ for a birational morphism $\rho:X''\to X'$. Since $X'$ is factorial (resp. $\Q$-factorial), $D':=\rho_*D''$ belongs to $\Car(X')$ (resp. $\Car(X')_\Q$), and $D''-\rho^*D'$ is both $\rho$-exceptional and $\rho$-numerically trivial, hence trivial. By the projection formula, it follows that $D'$ is $\mu'$-numerically trivial and $D=\mu'_*D'$. 
\end{proof}

\begin{cor}\label{cor:exact} With the same assumption as in Proposition \ref{prop:numinde}, $\mu_*:Z^1(X')\to Z^1(X)$ induces:
\begin{itemize}
\item[(i)] an exact sequence of abelian groups
$$
0\to\Exc^1(\mu)\to N^1(X'/X)\to\Cln(X)\to 0
$$
if $X'$ is factorial, where $\Exc^1(\mu)$ is the (free abelian) group of $\mu$-exceptional divisors and $N^1(X'/X)$ is the group of $\mu$-numerical equivalence classes. 
\item[(ii)] an exact sequence of $\Q$-vector spaces
$$
0\to\Exc^1(\mu)_\Q\to N^1(X'/X)_\Q\to\Cln(X)_\Q\to 0
$$
if $X'$ is $\Q$-factorial.
\end{itemize}
In particular, $\Cln(X)$ is a finitely generated abelian group. 
\end{cor}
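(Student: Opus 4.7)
My plan is to derive the two exact sequences by analyzing the push-forward $\mu_*:Z^1(X')\to Z^1(X)$ at the level of Weil divisors, and then factor it through the respective equivalence relations, with Proposition~\ref{prop:numinde} supplying the essential ingredient for computing the kernel.

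First I would observe that $\mu_*$ is surjective: any Weil divisor $D$ on $X$ is the push-forward of its strict transform $\tilde D$ on $X'$. Its kernel is precisely $\Exc^1(\mu)$, since a prime divisor on $X'$ either dominates a prime divisor on $X$ (in which case it is preserved by $\mu_*$) or is $\mu$-exceptional. Next, since numerically trivial Cartier divisors on $X'$ push forward into $\NC(X)$ \emph{by definition}, the composition $Z^1(X')\xrightarrow{\mu_*}Z^1(X)\to\Cln(X)$ descends to a surjection $\bar\mu_*:N^1(X'/X)\twoheadrightarrow\Cln(X)$ when $X'$ is factorial (so that $Z^1(X')=\Car(X')$), and to the analogous rational surjection $N^1(X'/X)_\Q\twoheadrightarrow\Cln(X)_\Q$ when $X'$ is $\Q$-factorial.

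The central step is to compute $\ker\bar\mu_*$. If $[D']\in N^1(X'/X)$ is killed, meaning $\mu_* D'\in\NC(X)$, then by Proposition~\ref{prop:numinde}(i) there is a unique $\mu$-numerically trivial $D''\in\Car(X')$ with $\mu_* D''=\mu_* D'$. Then $D'-D''\in\ker\mu_*=\Exc^1(\mu)$, and since $[D'']=0$ in $N^1(X'/X)$, the class $[D']=[D'-D'']$ lies in the image of $\Exc^1(\mu)$. The reverse inclusion is immediate since exceptional divisors push forward to zero. Finally, $\Exc^1(\mu)\to N^1(X'/X)$ is injective by the negativity lemma---already invoked in the proof of Proposition~\ref{prop:numinde}---which forbids a nonzero exceptional divisor from being $\mu$-numerically trivial. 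This yields the exact sequence of~(i), and the identical argument tensored with $\Q$ gives~(ii).

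For the final assertion, by Hironaka one can always choose a resolution $\mu:X'\to X$ with $X'$ smooth, hence factorial. Part~(i) then identifies $\Cln(X)$ with the quotient $N^1(X'/X)/\Exc^1(\mu)$. The relative N\'eron--Severi group $N^1(X'/X)$ is a finitely generated abelian group (a standard fact for any projective morphism), and $\Exc^1(\mu)$ is free of finite rank, so $\Cln(X)$ is finitely generated. The only delicate step is the kernel computation, as that is where the content of Proposition~\ref{prop:numinde} is genuinely used; everything else is bookkeeping together with the negativity lemma and the finite generation of relative N\'eron--Severi groups.
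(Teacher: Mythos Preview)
Your proof is correct and follows the same approach as the paper, which simply states that the exact sequences ``follow immediately from Proposition~\ref{prop:numinde}'' and then cites Kleiman's relative theorem of the base for the finite generation of $N^1(X'/X)$; you have spelled out the details the paper left implicit. One very minor imprecision: when $X'$ is factorial but not smooth, the fact that $\mu$-numerically trivial Cartier divisors on $X'$ push forward into $\NC(X)$ is not literally ``by definition'' (Definition~\ref{defi:numcar} requires a smooth model), but follows at once by pulling back to a resolution of $X'$.
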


\begin{proof} The exact sequences in (i) and (ii) follow immediately from Proposition \ref{prop:numinde}. The last assertion is a consequence of the relative version of the theorem of the base \cite[p.334, Proposition 3]{Kle66}, which guarantees that $N^1(X'/X)$ is finitely generated.
\end{proof}

\begin{rmk}\label{rmk:special} As a special case of (ii) above, if $X'$ is $\Q$-factorial and $(E_i)$ denotes the $\mu$-exceptional prime divisors, then $X$ is numerically $\Q$-factorial iff for every $D'\in\Car(X')$ there exists $a_i\in\Q$ such that 
\begin{equation}\label{equ:dprime}
\left(D'+\sum_i a_i E_i\right)\cdot C=0
\end{equation}
holds for all curves $C\subset X'$ contained in a $\mu$-fiber.
\end{rmk}

\begin{eg}\label{eg:cone} If $X$ is an affine cone over a smooth projective polarized variety $(Y,L)$, then 
$$
\Cll(X)\simeq\Pic(Y)/\Z L
$$
and
$$
\Cln(X)\simeq\NS(Y)/\Z c_1(L).
$$ 
In particular, $X$ is numerically $\Q$-factorial iff $\rho(Y)=1$. 
\end{eg}

\begin{eg} Every surface is numerically $\Q$-factorial. This is directly related to the existence of Mumford's numerical pull-back. Indeed, let $\mu:X'\to X$ be a resolution of singularities, with exceptional divisor $\sum_i E_i$. Since the intersection matrix $(E_i\cdot E_j)$ is negative definite, for $D'\in\Car(X')$ we can find $a_i\in\Q$ such that (\ref{equ:dprime}) holds for each curve $C=E_j$.

Note that $\Cln(X)$ is however non trivial in general, even for a surface. For instance, it follows from Example \ref{eg:cone} that $\Cln(X)=\Z/2\Z$ for an $A_1$-singularity. 
\end{eg}

\begin{eg} If $X$ is log terminal in the sense of \cite{dFH09}, \ie if $(X,\D)$ is klt for some effective $\Q$-Weil divisor $\D$, it follows from the current knowledge in the Minimal Model Program that there exists a small projective birational morphism $\mu:X'\to X$ such that $X'$ is $\Q$-factorial. By (ii) of Corollary \ref{cor:exact}, we then have $N^1(X'/X)_\Q\simeq\Cln(X)_\Q$, which is thus trivial iff $\mu$ is an isomorphism. In other words, $X$ is numerically $\Q$-factorial iff $X$ is $\Q$-factorial. Since $X$ has rational singularities, the previous conclusion will also follow from Theorem \ref{thm:ratsing} below. 
\end{eg}

Let us now check that Definition \ref{defi:numcar} is indeed compatible with \cite[Definition 2.26, Remark 2.27]{BdFF}.\footnote{More precisely, numerically $\Q$-Cartier divisors in the present sense correspond to numerically Cartier divisors in the sense of \cite{BdFF}.}

\begin{prop}\label{prop:numcar} A Weil divisor $D$ on $X$ is numerically $\Q$-Cartier in the sense of Definition \ref{defi:numcar} iff 
\begin{equation}\label{equ:o}
\nu\left(\O_X(-mD)\right)=-\nu\left(\cO_X(mD)\right)+o(m)
\end{equation}
for all $\n\in \DVal_X$. 

For each projective birational morphism $\mu:X'\to X$ with $X'$ $\Q$-factorial, we then have
\begin{equation}\label{equ:dprime}
\lim_{m\to\infty}m^{-1}\nu\left(\O_X(-mD)\right)=\nu\left(\mu^*_\num D\right)
\end{equation}
for all $\nu\in \DVal_X$. In particular, the limit in the right-hand side is rational. 
\end{prop}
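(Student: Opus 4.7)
The plan is to fix a projective birational morphism $\mu\colon X'\to X$ with $X'$ $\Q$-factorial (for instance any resolution of singularities), and to analyse the fractional ideal sheaves $\O_X(\pm mD)$ through their pullbacks to $X'$, using the canonical identification $\DVal_X=\DVal_{X'}$. The inclusions $\O_X(-mD)\cdot\O_X(-nD)\subset\O_X(-(m+n)D)$ and $\O_X(-mD)\cdot\O_X(mD)\subset\O_X$ imply that for every $\nu\in\DVal_X$ the sequences $m\mapsto\nu(\O_X(\pm mD))$ are subadditive, so by Fekete the \emph{envelopes}
\[
\phi_-(\nu):=\lim_m m^{-1}\nu(\O_X(-mD))\qquad\text{and}\qquad \phi_+(\nu):=-\lim_m m^{-1}\nu(\O_X(mD))
\]
exist in $\R$ and satisfy $\phi_-(\nu)\geq\phi_+(\nu)$. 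Condition (\ref{equ:o}) is therefore equivalent to the pointwise identity $\phi_-\equiv\phi_+$ on $\DVal_X$.

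Next, exploiting the $\Q$-factoriality of $X'$, I would decompose $\mu^{-1}\O_X(-mD)\cdot\O_{X'}=\O_{X'}(-D_m)\cdot I_m$, where $D_m\in Z^1(X')$ is the divisorial part (automatically $\Q$-Cartier) and $I_m\subset\O_{X'}$ an ideal sheaf with $\supp(I_m)$ of codimension $\geq 2$. The above containment, after taking reflexive hulls on both sides, yields $D_{m+n}\leq D_m+D_n$, so that the normalized limit $\bar D:=\lim_m m^{-1}D_m$ exists in $\Div(X')_\R$. Moreover, for any prime divisor $E\subset X'$ we have $\ord_E(I_m)=0$, whence $\phi_-(\ord_E)=\ord_E(\bar D)$.

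For the forward direction numerically $\Q$-Cartier $\Rightarrow$ (\ref{equ:dprime}) and (\ref{equ:o}): let $D':=\mu^*_\num D\in\Car(X')_\Q$ as in Proposition \ref{prop:numinde}. For sufficiently divisible $m$, the pushforward inclusion $\mu_*\O_{X'}(-mD')\subset\O_X(-mD)$ combined with the $\mu$-numerical triviality of $D'$ (via Serre vanishing applied to tensor products with $\mu$-relatively ample divisors and a negativity-lemma argument) shows that $\mu^{-1}\O_X(-mD)\cdot\O_{X'}$ coincides with $\O_{X'}(-mD')$ modulo a codimension-$\geq 2$ ideal. Hence $D_m=mD'$ for such $m$, giving $\bar D=D'$ and the limit formula (\ref{equ:dprime}); applying the same argument to $-D$ then yields (\ref{equ:o}). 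Conversely, assuming $\phi_-\equiv\phi_+$, the construction applied to $-D$ produces $\bar D_+\in\Div(X')_\R$ with $\bar D+\bar D_+=0$ as $\R$-Weil divisors on $X'$; the remaining task is to upgrade $\bar D$ to the numerical pullback of $D$. Numerical triviality of $\bar D$ on curves $C\subset X'$ contracted by $\mu$ would be obtained by testing the identity $\phi_-=\phi_+$ against divisorial valuations realized on blow-ups $X''\to X'$ along such curves, while rationality of the coefficients of $\bar D$ would be forced by the finite-dimensionality of $N^1(X'/X)_\Q$ from Corollary \ref{cor:exact}; Proposition \ref{prop:numinde} then gives $\bar D=\mu^*_\num D$.

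\textbf{Main obstacle.} The critical technical point is the comparison $\mu^{-1}\O_X(-mD)\cdot\O_{X'}\sim\O_{X'}(-mD')$ in the forward direction, and the symmetric upgrade from the pointwise equality $\phi_-=\phi_+$ to the \emph{global} $\mu$-numerical triviality of $\bar D$ in the converse direction. Both require controlling the codimension-$\geq 2$ ideals $I_m$ on higher birational models as $m\to\infty$, and most naturally rely on a negativity-lemma argument (in the spirit of the proof of Proposition \ref{prop:numinde}) coupled with the nef $b$-divisor formalism developed in \cite{BdFF}.
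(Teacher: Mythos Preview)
Your overall framework is the paper's: rephrase (\ref{equ:o}) as $\Env_X(-D)=-\Env_X(D)$ (your $\phi_-$ and $-\phi_+$), use the easy inequality coming from $\O_X(-mD)\cdot\O_X(mD)\subset\O_X$, and squeeze. Your rationality argument in the converse direction is also essentially the paper's once phrased correctly: $\mu_*$ restricted to $\mu$-numerically trivial $\R$-divisors is an injective map defined over $\Q$, and $\mu_*\bar D=D$ lies in $Z^1(X)_\Q$.

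There is, however, a genuine gap in your forward direction. The assertion that $\mu^{-1}\O_X(-mD)\cdot\O_{X'}$ agrees with $\O_{X'}(-mD')$ up to codimension $\ge 2$, i.e.\ that $D_m=mD'$ for sufficiently divisible $m$, is \emph{false}, and no combination of Serre vanishing and negativity will rescue it. Take $X$ to be the affine cone over an elliptic curve $E$ polarized by $L$ of degree one, and let $D$ be the ruling over a point $p\in E$ with $L\otimes\O_E(-p)$ non-torsion in $\Pic^0(E)$. Then $X$ is numerically $\Q$-factorial (so $D$ is numerically $\Q$-Cartier) but $D$ is not $\Q$-Cartier, and one computes directly
\[
\ord_{E_0}\big(\O_X(-mD)\big)=\min\{k\ge 0: H^0(E,L^k(-mp))\ne 0\}=m+1,
\]
whereas $m\,\ord_{E_0}(D')=m$, for \emph{every} $m\ge 1$ (here $E_0\subset X'$ is the exceptional divisor). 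Only the limit $m^{-1}D_m\to D'$ survives.

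The paper circumvents this by working purely asymptotically. Since $D'=\mu^*_\num D$ is $\mu$-numerically trivial, both $\pm D'$ are $\mu$-nef, so the Cartier $b$-divisor $\overline{D'}$ satisfies $\overline{D'}\le\Env_X(D)$ and $-\overline{D'}\le\Env_X(-D)$ by \cite[Proposition~2.12]{BdFF}; adding these and combining with the trivial bound $\Env_X(D)+\Env_X(-D)\le 0$ forces $\Env_X(D)=\overline{D'}$, which gives (\ref{equ:o}) and (\ref{equ:dprime}) simultaneously. This is precisely the squeeze you anticipate in your last paragraph, but it must be run at the level of $b$-divisor envelopes, never at finite $m$. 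For the converse the paper simply invokes \cite[Lemma~2.10]{BdFF} to get $\mu$-numerical triviality of $\Env_X(D)_{X'}$ directly; your proposal to test on blow-ups along curves is plausible in spirit but would require a separate argument.
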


\begin{proof} In the terminology of \cite[Section 2]{BdFF}, (\ref{equ:o}) reads 
$$
\Env_X(-D)=-\Env_X(D),
$$ 
where $\Env_X(D)$ is the \emph{nef envelope} of $D$, \ie the $b$-divisor over $X$ characterized by
$$
\nu\left(\Env_X(D)\right)=\lim_{m\to\infty}m^{-1}\nu\left(\cO_X(mD)\right)
$$
for all $\nu\in \DVal_X$. Assume first that $D\in Z^1(X)$ is numerically $\Q$-Cartier. Let $\m:X'\to X$ be a projective birational morphism with $X'$ $\Q$-factorial and set $D':=\mu^*_\num D$. The Cartier $b$-divisor $\overline D'$ induced by pulling-back $D'$ is then relatively nef over $X$ and satisfies $\overline D'_X=D$, and hence 
$$
\overline D'\le\Env_X(D)
$$
by \cite[Proposition 2.12]{BdFF}. Since $-\overline D'$ is also relatively nef, we similarly get
$$
-\overline D'\le\Env_X(-D).
$$
Summing up these two inequalities and using the trivial inequality
$$
\Env_X(D)+\Env_X(-D)\le\Env_X(D-D)=0,
$$
we infer
$$
\Env_X(D)=\overline D',
$$
which proves (\ref{equ:o}) and (\ref{equ:dprime}). 

Conversely, assume that $D\in Z^1(X)$ satisfies $\Env_X(-D)=-\Env_X(D)$. By \cite[Lemma 2.10]{BdFF}, it follows that $D':=\Env_X(D)_{X'}\in\Car(X')_\R$ is $\mu$-numerically trivial. Since $\mu_*D'=D$ belongs to $Z^1(X)_\Q$ and $\mu_*$ is defined over $\Q$, the injectivity of $\mu_*$ on $\mu$-numerically trivial divisors implies that $D'$ is in fact a $\Q$-divisor, and hence that $D$ is numerically $\Q$-Cartier with $D'=\mu^*_\num D$. 
\end{proof} 

\begin{rmk} In particular, this result shows that the envelope $\Env_X(D)$ of a Weil divisor $D\in Z^1(X)$ such that $\Env_X(-D)=-\Env_X(D)$ is a $\Q$-Cartier $b$-divisor (the rationality of the coefficients being in particular not obvious from the definition). In fact, the whole point of the present point of view is to highlight the fact that the $\R$-vector space of $\R$-Weil divisors $D\in Z^1(X)_\R$ with $\Env_X(-D)=-\Env_X(D)$ is in fact defined over $\Q$. 
\end{rmk}

\subsection{The case of rational singularities}
In this section we prove:
\begin{thm} \label{thm:ratsing}  Let $X$ be a normal variety with at most rational singularities. Then 
$$
\NC(X)_\Q=\Car(X)_\Q,
$$  
\ie a Weil divisor is numerically $\Q$-Cartier iff it is $\Q$-Cartier. In particular, $X$ is numerically $\Q$-factorial iff $X$ is $\Q$-factorial. 
\end{thm}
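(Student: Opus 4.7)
The inclusion $\Car(X)_\Q \subset \NC(X)_\Q$ is immediate: if $D$ is $\Q$-Cartier then, fixing any resolution $\mu:X'\to X$, the $\Q$-Cartier divisor $\mu^*D$ is $\mu$-numerically trivial and satisfies $\mu_*\mu^*D=D$. The content lies in the reverse inclusion $\NC(X)_\Q\subset\Car(X)_\Q$.

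Fix $D\in\NC(X)_\Q$. Since being $\Q$-Cartier is local on $X$ and every Weil divisor is automatically Cartier on $X_\reg$, we may work in a neighborhood of an arbitrary singular point $x\in X$; passing to the localization (or henselization, or completion) at $x$, we may assume $X$ is local with closed point $x$. Fix a resolution $\mu:X'\to X$ with $X'$ smooth, hence factorial. By Proposition~\ref{prop:numinde}(ii), there is a unique $\mu$-numerically trivial $D'\in\Car(X')_\Q$ with $\mu_*D'=D$. After replacing $D$ by a suitable positive integer multiple (which does not affect the property of being $\Q$-Cartier), we may assume $D'$ is an integral Cartier divisor on $X'$, and set $L:=\cO_{X'}(D')\in\Pic(X')$. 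The task reduces to showing that some positive power $L^{\otimes n}$ is trivial: for then $nD'\sim 0$, so $nD=\mu_*(nD')$ is principal, and $D$ is $\Q$-Cartier.

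The key input — this is where rationality enters, following an argument of Kawamata — is the following rigidity statement: \emph{if $X$ has rational singularities, then the subgroup $\Pic^\tau(X'/X)\subset\Pic(X')$ of $\mu$-numerically trivial line bundles is torsion.} The proof uses the characteristic vanishing $R^i\mu_*\cO_{X'}=0$ for $i\ge 1$; by the theorem on formal functions this controls the cohomology $H^i(E_n,\cO_{E_n})$ of the infinitesimal thickenings $E_n$ of the exceptional fiber $E=\mu^{-1}(x)_\red$. Feeding this into the exponential (or $\ell$-adic Kummer) sequence produces an injection of the formal Picard group $\Pic(\widehat{X'})$ into a finitely generated abelian group such as $H^2(E,\Z)$. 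By Grothendieck's formal existence theorem, $\Pic(\widehat{X'})$ agrees with $\Pic(X')$ (up to the passage to the complete local base), so $\Pic(X')$ itself embeds into a finitely generated abelian group. A $\mu$-numerically trivial line bundle has $c_1(L)=0$ in $H^2(E,\Q)$, since numerical triviality against all proper curves in $\mu$-fibers (which lie in $E$) kills the class modulo torsion. Hence $\Pic^\tau(X'/X)$ maps into the torsion part, as required.

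Applying this rigidity to our $L$, we conclude $L^{\otimes n}\cong\cO_{X'}$ for some $n\ge 1$, and the reduction above yields $nD\in\Car(X)$, so $D\in\Car(X)_\Q$. The main obstacle is the Kawamata-style rigidity step, which blends rational-singularity vanishing, the theorem on formal functions, the exponential sequence, and formal GAGA. Once this is granted, the deduction from Proposition~\ref{prop:numinde} is essentially formal; the final statement ($X$ numerically $\Q$-factorial iff $\Q$-factorial) then follows at once from $\Cln(X)_\Q=\Cll(X)_\Q\otimes\Q$ on rational singularities.
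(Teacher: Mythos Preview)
Your overall strategy is the same Kawamata-style argument the paper uses: reduce to a local situation, exploit the vanishing $R^i\mu_*\cO_{X'}=0$ together with an exponential-type sequence to embed the relative Picard group into $H^2$ of the exceptional fiber $E$, and then argue that a $\mu$-numerically trivial line bundle has torsion image there. The paper executes this analytically (exponential sequence on $X'^{\an}$, classical GAGA, and Samuel's Lemma~\ref{lem:carfor} to descend $\Q$-Cartierness from the formal completion back to $X$), while you sketch a parallel formal/algebraic version. The formal details of your reduction are a bit loose --- localization alone will not give you formal GAGA for $\Pic$; you need completion, and then Samuel's lemma to descend --- but these are standard and fixable.

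The genuine gap is at the crucial step. You write that a $\mu$-numerically trivial $L$ has $c_1(L)=0$ in $H^2(E,\Q)$ ``since numerical triviality against all proper curves in $\mu$-fibers\ldots kills the class modulo torsion''. This is exactly the statement of the paper's Lemma~\ref{lem:numsing}, and it is \emph{not} obvious: the fiber $E$ is a singular, generally reducible projective variety, curve classes need not generate $H_2(E,\Q)$, and the cap-product pairing $H^2(E,\Q)\times H_2(E,\Q)\to\Q$ need not be nondegenerate, so vanishing on all curves does not formally imply $c_1(L)=0$ rationally. The paper supplies a proof (attributed to Voisin) using mixed Hodge theory: pulling back to a resolution of $E$ shows $c_1(L)\in W_1 H^2(E,\Q)$, and strictness of morphisms of mixed Hodge structures then forces any such class in the image of $\Pic$ to vanish. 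Without this ingredient or an equivalent one, your rigidity step --- and hence the whole argument --- is incomplete.
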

The proof is inspired from that of \cite[Lemma 1.1]{Kaw88}, which states that the $\Q$-vector space $Z^1(X)_\Q/\Car(X)_\Q$ is finite dimensional when $X$ has rational singularities. We will need the following two results. 

\begin{lem}\label{lem:carfor}\cite[Proposition 1]{Sam61} A Weil divisor $D$ on $X$ is Cartier iff its restriction to the formal completion of $X$ at each (closed) point $x\in X$ is Cartier. 
\end{lem}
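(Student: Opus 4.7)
The plan is to split the biconditional into its two directions. The forward direction is trivial: if $D$ is locally principal in the Zariski topology near $x$, the same defining equation realizes $D$ on the formal completion at $x$. All the content is in the converse, which I would prove by faithfully flat descent from the completion $\hat A$ down to $A:=\cO_{X,x}$.

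Concretely, I would fix a closed point $x\in X$ and set $A:=\cO_{X,x}$, $\hat A$ its $\fm_x$-adic completion, and $M:=\cO_X(D)_x$, a finitely generated reflexive rank-one $A$-module. Since $A$ is local, $D$ is Cartier near $x$ precisely when $M$ is free of rank one over $A$, and this is what I aim to establish.

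The first substantial step is to identify $M\otimes_A\hat A$ with $\cO_{\Spec\hat A}(\hat D)$, where $\hat D:=D|_{\Spec\hat A}$ is the pullback of $D$ along the flat morphism $\Spec\hat A\to\Spec A$ (well defined because $\hat A$ is normal, since $A$ is excellent and normal). The flatness of $\hat A/A$, the reflexivity $M\cong M^{**}$, and the finite presentation of $M$ together imply that $M\otimes_A\hat A$ is again reflexive of rank one; a check at the generic points of the irreducible components of $\hat D$ identifies its associated Weil divisor with $\hat D$. The hypothesis that $\hat D$ is Cartier then says that $M\otimes_A\hat A$ is invertible, hence free of rank one over the local ring $\hat A$.

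The last step is the descent. Nakayama on both sides gives that the minimal number of generators of $M$ equals $\dim_{k(x)}M/\fm_x M$ and likewise for $M\otimes_A\hat A$, which equals $1$ by the previous step; hence $M$ is cyclic, and a choice of generator yields a presentation $0\to I\to A\to M\to 0$ for some ideal $I\subset A$. Tensoring with the flat module $\hat A$ and using that a surjection $\hat A\twoheadrightarrow M\otimes_A\hat A\cong\hat A$ between Noetherian rings is automatically an isomorphism, we get $I\otimes_A\hat A=0$, whence $I=0$ by faithful flatness and $M\cong A$ as wanted. The main obstacle I anticipate is the identification $M\otimes_A\hat A\cong\cO_{\Spec\hat A}(\hat D)$, which relies on the normality of $\hat A$ (hence the excellence of $A$) to ensure that reflexive hulls commute with this base change; once that is in hand the rest of the argument is essentially formal.
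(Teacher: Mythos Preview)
The paper does not supply its own proof of this lemma; it simply cites \cite[Proposition~1]{Sam61} and uses the result as a black box. Your argument via faithfully flat descent from $\hat A$ to $A=\cO_{X,x}$ is correct and is essentially the standard proof of this fact. One minor quibble of phrasing: in the final step you speak of ``a surjection $\hat A\twoheadrightarrow M\otimes_A\hat A\cong\hat A$ between Noetherian rings,'' but this is a map of $\hat A$-modules, not of rings; the point is simply that a surjective $\hat A$-linear map $\hat A\to\hat A$ is multiplication by a unit, hence injective, so $I\otimes_A\hat A=0$ and faithful flatness finishes. The identification $M\otimes_A\hat A\cong\cO_{\Spec\hat A}(\hat D)$ is justified exactly as you indicate: $\Hom$ commutes with flat base change for finitely presented modules over a Noetherian ring, so reflexivity of $M$ passes to $M\otimes_A\hat A$, and $\hat A$ is normal because $A$ is excellent and normal.
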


\begin{lem}\label{lem:numsing} If $Y$ is a (possibly reducible) projective complex variety, a line bundle $L$ on $Y$ is numerically trivial, \ie $L\cdot C=0$ for all curves $C\subset Y$, iff $c_1(L)=0$ in $H^2(Y,\Q)$. 
\end{lem}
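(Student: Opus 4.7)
The direction ``$c_1(L)=0$ in $H^2(Y,\Q)$ implies $L$ is numerically trivial'' is immediate: for an irreducible curve $C\subset Y$ with normalization $\nu:\tilde C\to Y$ (composing with the inclusion of $C$), the intersection number $L\cdot C$ equals $\deg\nu^*L=\langle c_1(L),\nu_*[\tilde C]\rangle$ by the projection formula, and this pairing vanishes whenever $c_1(L)=0$ rationally.

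For the converse, the plan is to pass to a resolution of singularities $\pi:\tilde Y\to Y$ (desingularizing each irreducible component separately if $Y$ is reducible), obtaining a smooth projective $\tilde Y$ with $\pi$ proper and surjective. Any irreducible curve $C'\subset\tilde Y$ either contracts to a point under $\pi$, or maps onto a curve in $Y$; in either case the projection formula yields $\pi^*L\cdot C'=L\cdot\pi_*C'=0$, so $\pi^*L$ is numerically trivial on $\tilde Y$. Since each connected component of $\tilde Y$ is smooth projective and $\Pic^\tau/\Pic^0$ is a finite group there (classical N\'eron--Severi theory), we infer $c_1(\pi^*L)=\pi^*c_1(L)=0$ in $H^2(\tilde Y,\Q)$.

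The main obstacle is the final descent: deducing $c_1(L)=0$ in $H^2(Y,\Q)$ from $\pi^*c_1(L)=0$, despite $\pi^*:H^2(Y,\Q)\to H^2(\tilde Y,\Q)$ generally having a nontrivial kernel for singular $Y$. The key input is Deligne's theorem in mixed Hodge theory, asserting that the kernel of $\pi^*$ on $H^k$ coincides with the weight piece $W_{k-1}H^k(Y,\Q)$ whenever $\pi$ is a proper surjection from a smooth variety. Coupled with the compatibility of the exponential sequence with Deligne's Hodge filtration, which places $c_1(L)$ inside $F^1H^2(Y,\C)$, the proof reduces to the purely Hodge-theoretic assertion $W_1H^2(Y,\Q)\cap F^1H^2(Y,\C)=0$. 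This holds because $\Gr^W_0H^2$ is of type $(0,0)$ (hence disjoint from $F^1$) while $\Gr^W_1H^2$ has types $(1,0)\oplus(0,1)$; a rational class in $F^1\cap W_1$ would have to project to a rational $(1,0)$-class in $\Gr^W_1$, which is impossible since complex conjugation swaps $(1,0)$ and $(0,1)$.
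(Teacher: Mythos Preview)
Your proof is correct and shares its opening with the paper's argument: both pass to a resolution $\pi:\tilde Y\to Y$, invoke the smooth case to obtain $\pi^*c_1(L)=0$, and identify $\ker\pi^*$ with $W_1H^2(Y,\Q)$ via Deligne's theory. The divergence is in the descent step. The paper (following Voisin) constructs a morphism $f:Y\to Z$ to a \emph{smooth} projective $Z$ with $L=f^*M$, by writing $L$ as a difference of very ample bundles, and then uses strictness of the morphism of mixed Hodge structures $f^*$ with respect to the weight filtration:
\[
c_1(L)\in f^*H^2(Z,\Q)\cap W_1H^2(Y,\Q)=f^*\bigl(W_1H^2(Z,\Q)\bigr)=0,
\]
the last vanishing because $H^2(Z,\Q)$ is pure of weight $2$. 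Your route instead brings in the Hodge filtration: you note $c_1(L)\in F^1H^2(Y,\C)$ and argue directly that a rational class in $W_1\cap F^1$ must vanish by examining the Hodge types of $\Gr^W_0$ and $\Gr^W_1$.

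Your argument is more intrinsic, avoiding the auxiliary morphism $f$, and the vanishing $W_1H^2(Y,\Q)\cap F^1H^2(Y,\C)=0$ is a clean general fact about mixed Hodge structures with non-negative weights. One caveat: justifying $c_1(L)\in F^1$ for singular $Y$ via ``compatibility of the exponential sequence with the Hodge filtration'' is a little quick, since relating $\ker\bigl(H^2(Y,\C)\to H^2(Y,\O_Y)\bigr)$ to $F^1$ in the singular case goes through the Du Bois complex and $E_1$-degeneration. The quickest justification is in fact the paper's own trick: writing $L$ as a difference of very ample bundles realizes $c_1(L)$ as a pullback from projective space, and morphisms of mixed Hodge structures preserve $F^\bullet$. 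Seen this way, the two approaches are cousins, trading strictness for $W$ against compatibility with $F$.
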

\begin{proof} When $Y$ is non-singular, the result is well-known, and amounts to the Hodge conjecture for $1$-dimensional cycles (which follows from the $1$-codimensional case via the Hard Lefschetz theorem). However, we haven't been able to locate a reference in the literature in the general singular case; we are very grateful to Claire Voisin for having shown us the following argument. Let $\p:Y'\to Y$ be a resolution of singularities. Since $\pi^*L$ is also numerically trivial, we have $\pi^*c_1(L)=0$ in $H^2(Y',\Q)$, by the result in the smooth case. By \cite[Corollary 5.42]{PS}, this means that $c_1(L)\in W_1 H^2(Y,\Q)$, where $W_\bullet$ denotes the weight filtration of the mixed Hodge structure. The problem is thus to show that $W_1 H^2(Y,\Q)$ only meets the image of $\Pic(Y)$ at $0$. 

To see this, note that there exists a morphism $f:Y\to Z$ to a smooth projective variety $Z$ such that $L=f^*M$ for some line bundle $M$ on $Z$; indeed, this is true with $Z$ a projective space when $L$ is very ample, and writing $L$ as a difference of very ample line bundles gives the general case, with $Z$ a product of two projective spaces. 

Since $f^*:H^2(Z,\Q)\to H^2(Y,\Q)$ is a morphism of mixed Hodge structures, it is \emph{strict} with respect to weight filtrations, and we get
$$
c_1(L)\in f^*H^2(Z,\Q)\cap W_1 H^2(Y,\Q)=f^*\left(W_1 H^2(Z,\Q)\right),
$$
which is zero since $Z$ is smooth.
\end{proof}

\begin{proof}[Proof of Theoren \ref{thm:ratsing}] Let $\m: X' \to X$ be a resolution of singularities and let $D'\in\Car(X')$ be $\mu$-numerically trivial. Our goal is to show that $D:=\m_*D'$ is $\Q$-Cartier. By Lemma \ref{lem:carfor}, it is enough to show that every (closed) point $x\in X$ has an analytic neighborhood $U$ on which $D^\an$ is $\Q$-Cartier. 

The exponential exact sequence on the associated complex analytic variety $X'^\an$ yields an exact sequence
$$
R^1\m^\an_*\cO\to R^1\m^\an_*\cO^*\to R^2\m_*\Z\to R^2\m^\an_*\cO
$$
of sheaves on $X^\an$, where the two extreme term coincide by GAGA with the analytifications of $R^q\m_*\cO$ for $q=1,2$,   and hence vanish since $X$ has rational singularities. We thus have an isomorphism 
$$
\left(R^1\m^\an_*\cO^*\right)_x\simeq\left(R^2\mu^\an_*\Z\right)_x=H^2\left(\mu^{-1}(x),\Z\right),
$$
where the right-hand equality holds by properness of $\mu^\an$ (which again follows from GAGA). Since $D'$ has degree $0$ on each projective curve $C\subset\mu^{-1}(x)$, its image in $H^2\left(\mu^{-1}(x),\Q\right)$ is trivial by Lemma \ref{lem:numsing}. By the above isomorphism, the image of $D'^\an$ in $\left(R^1\m^\an_*\cO^*\right)_x\otimes\Q$ is also trivial, which means that $D'^\an$ is $\Q$-linearly equivalent to $0$ on $(\mu^\an)^{-1}(U)$ for a small enough analytic neighborhood $U$ of $x$. Since the morphism $(\mu^\an)^{-1}(U)\to U$ is a proper modification, it follows as desired that $D^\an$ is $\Q$-Cartier on $U$. 
\end{proof}

\subsection{Multiplier ideals in the numerically $\Q$-Gorenstein case}
\begin{defi} A normal variety $X$ is \emph{numerically $\Q$-Gorenstein} if $K_X$ is numerically $\Q$-Cartier.
\end{defi}
Given a resolution of singularities $\m:X'\to X$, Corollary \ref{cor:exact} shows that $X$ is numerically $\Q$-Gorenstein iff $K_{X'}$ is $\mu$-numerically equivalent to a $\mu$-exceptional $\Q$-divisor, which is then uniquely determined and denoted by $K^\num_{X'/X}$. In other words, we set
$$
K^\num_{X'/X}:=K_{X'}-\mu^*_\num K_X.
$$
By Proposition \ref{prop:numcar}, for each prime divisor $E\subset X'$ we then have
\begin{equation}\label{equ:anum}
A_X(\ord_E)=1+\ord_E\left(K^\num_{X'/X}\right).
\end{equation}

\begin{lem}\label{lem:numqgor} Assume that $X$ is numerically $\Q$-Gorenstein, and let $N\subset X$ be a normalizing subscheme. For each good resolution $\p$ of $N$, the log discrepancy function $A:\Val_X\to\R\cup\{+\infty\}$ is then affine on the faces of the dual complex $\D_\p^N$, and 
$$
A_X=\sup_\p A_X\circ r_\p^N
$$ 
on $\Val_X^N$, where $\p$ ranges over all good resolution of $N$. 
\end{lem}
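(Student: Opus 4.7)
The plan is to prove the closed formula
\[
A_X(\nu) = A_{X_\pi}(\nu) + \nu(K^\num_{X_\pi/X})
\]
on $\Val_X^N$ for every good resolution $\pi$ of $N$, viewing $\Val_X = \Val_{X_\pi}$ canonically. Both conclusions of the lemma follow directly: affineness of $A_X$ on the faces of $\D_\pi^N$ because $A_{X_\pi}$ is affine on those faces by the smooth case of Theorem \ref{thm:extend}(i) applied to $X_\pi$, and because $\nu\mapsto\nu(K^\num_{X_\pi/X})$ is linear in the face coordinates since $K^\num_{X_\pi/X}$ is a $\Q$-divisor supported on $\pi^{-1}(N)$.

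On divisorial valuations the formula is a direct consequence of (\ref{equ:anum}) combined with the pull-back identity $K^\num_{Y/X}=K_{Y/X_\pi}+\rho^*K^\num_{X_\pi/X}$ for any further birational model $\rho:Y\to X_\pi$. This identity reflects the functoriality of the numerical pull-back, which is legitimate here since $K^\num_{X_\pi/X}$ is already $\Q$-Cartier on the smooth $X_\pi$, so that its pull-back along $\rho$ coincides with its numerical pull-back.

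To extend the formula from $\DVal_X$ to $\Val_X^N$, I would run the construction in the proof of Theorem \ref{thm:extend} with reference resolution $\mu := \pi$. This yields an effective $\pi$-exceptional divisor $D$ and a graded sequence of ideal sheaves $\fa_\bullet$ on $X_\pi$ satisfying $A_X=A_{X_\pi}+\nu(D)-\nu(\fa_\bullet)$ on all of $\Val_X$, so it suffices to prove $\nu(\fa_\bullet)=\nu(D)-\nu(K^\num_{X_\pi/X})$ on $\Val_X^N$. For any good resolution $\pi'\ge\pi$, both sides are continuous on $\D_{\pi'}^N$ and agree on the dense subset of divisorial valuations, hence agree throughout $\D_{\pi'}^N$; using that $D$ and $K^\num_{X_\pi/X}$ are supported on $\pi^{-1}(N)\subseteq(\pi')^{-1}(N)$, Proposition \ref{p:log a} converts this into $r_{\pi'}^N(\nu)(\fa_\bullet)=\nu(D)-\nu(K^\num_{X_\pi/X})$, a quantity constant in $\pi'\ge\pi$. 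The desired equality for $\nu$ itself then follows by squeezing: Proposition \ref{p:log a-bullet} gives $r_{\pi'}^N(\nu)(\fa_\bullet)\ge\nu(\fa_\bullet)$, while upper semicontinuity of $\nu\mapsto\nu(\fa_\bullet)$ along the convergence $r_{\pi'}^N(\nu)\to\nu$ supplies the reverse inequality in the limit. I expect this extension step to be the main technical obstacle.

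For the supremum formula, applying the established identity at $r_{\pi'}^N(\nu)$ for $\pi'\ge\pi$ (and using once more that $r_{\pi'}^N(\nu)(K^\num_{X_\pi/X})=\nu(K^\num_{X_\pi/X})$ by the support condition) yields $A_X(r_{\pi'}^N(\nu))=A_{X_\pi}(r_{\pi'}^N(\nu))+\nu(K^\num_{X_\pi/X})$; the smooth-case sup formula of Theorem \ref{thm:extend}(ii) for $A_{X_\pi}$ then gives $\sup_{\pi'\ge\pi}A_X\circ r_{\pi'}^N = A_X$ on $\Val_X^N$. For the reverse bound and any good resolution $\pi''$, I would apply the identity with $\pi''$ in place of $\pi$ and invoke the smooth-case inequality $A_{X_{\pi''}}\circ r_{\pi''}^N\le A_{X_{\pi''}}$ to deduce $A_X\circ r_{\pi''}^N\le A_X$, completing the argument.
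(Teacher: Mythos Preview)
Your proposal is correct and follows essentially the same strategy as the paper: establish the identity $A_X(\nu)=A_{X_\pi}(\nu)+\nu(K^{\num}_{X_\pi/X})$ on $\Val_X^N$, then read off both conclusions from the smooth case of Theorem~\ref{thm:extend} for $A_{X_\pi}$ together with Proposition~\ref{p:log a} applied to the $\pi$-exceptional components of $K^{\num}_{X_\pi/X}$. The only difference is in how the identity is extended from $\DVal_X$ to $\Val_X^N$: the paper simply asserts it on all of $\Val_X$ (implicitly using that both sides are continuous on faces, agree on the dense set of divisorial valuations, and satisfy $f=\lim_\pi f\circ r_\pi^N$ by Theorem~\ref{thm:extend}(ii) and Proposition~\ref{p:log a}), whereas you rederive it by going back to the explicit construction $A_X=A_{X_\pi}+\nu(D)-\nu(\fa_\bullet)$ from the proof of Theorem~\ref{thm:extend} and squeezing $\nu(\fa_\bullet)$ between $r_{\pi'}^N(\nu)(\fa_\bullet)$ and its $\limsup$. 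Your route is a bit longer but perfectly valid; the ``main technical obstacle'' you anticipated is in fact already absorbed by the continuity-on-faces and retraction properties established earlier, which is why the paper can dispose of it in one line.
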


\begin{proof} Write 
$$
K^\num_{X'/X}=K_{X'}-\p^*_\num K_X=\sum_i a_i E_i
$$
with $a_i\in\Q$ and $E_i$ $\p$-exceptional and prime. Modulo the canonical homeomorphism $\Val_{X'}\simeq\Val_X$, (\ref{equ:anum}) yields
$$
A_X(\n)=A_{X'}(\n)+\sum_i a_i\n(E_i)
$$
on $\Val_X$. Since $X'$ is smooth, $A_{X'}$ is affine on the faces of $\D_\p^N$ and satisfies $A_{X'}\ge A_{X'}\ge r_\p^N$. On the other hand, Proposition \ref{p:log a} shows that $\nu\mapsto\n(E_i)$ is also affine on the faces $\D_\p^N$, and satisfies $r_\p^N(\n)(E_i)=\n(E_i)$. The result follows. 
\end{proof}

The next result is Theorem \ref{t:3} from the introduction:
\begin{thm}\label{thm:numgor} Assume that $X$ is numerically $\Q$-Gorenstein, and let $\fa\subset\cO_X$ be an ideal sheaf. Let also $\m:X'\to X$ be a log resolution of $(X,\fa)$, so that $\m^{-1}\fa\cdot\cO_{X'}=\cO_{X'}(-D)$ with $D$ an effective Cartier divisor. For each exponent $c>0$ we then have
$$
\J(X,\fa^c) =\m_*\cO_{X'}\left(\lceil K_{X'/X}^\num-c D\rceil\right).
$$
\end{thm}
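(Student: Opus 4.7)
The strategy is to translate both sides into collections of valuative inequalities and to bridge them through the affineness of $A_X$ on the dual complex of a good resolution, which is available here because $X$ is numerically $\Q$-Gorenstein.

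First, I would rewrite the right-hand side in valuative form. Since $\ord_E(f) \in \Z$ and $\ord_E(K^\num_{X'/X}) = A_X(\ord_E) - 1$ by \eqref{equ:anum}, the condition $f \in \mu_* \cO_{X'}(\lceil K^\num_{X'/X} - cD \rceil)$ is equivalent to the strict inequality $\ord_E(f) > c\,\ord_E(\fa) - A_X(\ord_E)$ for every prime divisor $E$ on $X'$. The inclusion $\J(X, \fa^c) \subseteq \mu_* \cO_{X'}(\lceil K^\num_{X'/X} - c D \rceil)$ then follows immediately from this reformulation and Theorem \ref{t:mult-ideal}(a) applied to the divisorial valuations $\ord_E$.

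For the reverse inclusion, I would work locally on an affine $U \subseteq X$ and, given $f$ in the right-hand side, choose a normalizing subscheme $N \subset X$ containing $\Sing X$, the zero locus of $\fa$, and the zero locus of $f$ in $U$. Let $\pi = \mu \circ \rho : X_\pi \to X$ be a good resolution of $N$ that additionally log resolves both $(f)$ and $(\fa)$, obtained by further blowing up $\mu$. The key intermediate step is the independence
$$
\mu_* \cO_{X'}(\lceil K^\num_{X'/X} - cD \rceil) \;=\; \pi_* \cO_{X_\pi}(\lceil K^\num_{X_\pi/X} - c \pi^* D \rceil).
$$
This rests on the composition identity $(\mu\rho)^*_\num K_X = \rho^* \mu^*_\num K_X$, which follows from the uniqueness in Proposition \ref{prop:numinde} once one checks that $\rho^* \mu^*_\num K_X$ is $\pi$-numerically trivial and pushes forward to $K_X$. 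This yields $K^\num_{X_\pi/X} = K_{X_\pi/X'} + \rho^* K^\num_{X'/X}$, and combining with the classical identity $\rho_* \cO_{X_\pi}(\lceil K_{X_\pi/X'} + \rho^* M \rceil) = \cO_{X'}(\lceil M \rceil)$ for a $\Q$-Cartier $M$ on the smooth variety $X'$ produces the displayed equality. Applying the first paragraph to $\pi$, one then obtains $\ord_F(f) > c\,\ord_F(\fa) - A_X(\ord_F)$ for every prime $F$ on $X_\pi$.

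Now set $\phi(\nu) := \nu(f) - c\nu(\fa) + A_X(\nu)$ on $\Val_X$. On each face of the dual complex $\D_\pi^N$, the functions $\nu \mapsto \nu(\fa)$ and $\nu \mapsto \nu(f)$ are affine by Proposition \ref{p:log a} (since $\pi$ log resolves both and $N$ contains their zero loci), while $A_X$ is affine by Lemma \ref{lem:numqgor} thanks to the numerically $\Q$-Gorenstein hypothesis. Thus $\phi$ is affine on faces and strictly positive at each vertex $\ord_{F_j}$; as every $\nu \in \D_\pi^N$ writes as a nonnegative barycentric combination of vertices with not all weights vanishing (because $\nu(\cI_N) = 1$), one concludes $\phi > 0$ on $\D_\pi^N$. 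For a general $\nu \in \Val_X^N$, Proposition \ref{p:log a} yields $r_\pi^N(\nu)(\fa) = \nu(\fa)$ and $r_\pi^N(\nu)(f) = \nu(f)$, while Lemma \ref{lem:numqgor} gives $A_X(\nu) \geq A_X(r_\pi^N(\nu))$, whence $\phi(\nu) \geq \phi(r_\pi^N(\nu)) > 0$. Finally, if the center of $\nu$ lies outside $N$, then $\nu(\fa) = 0$, $\nu(f) \geq 0$, and $X$ is smooth at the center so $A_X(\nu) > 0$, again giving $\phi(\nu) > 0$. By homogeneity $\phi > 0$ on all of $\Val_X$, and Theorem \ref{t:mult-ideal}(a) yields $f \in \J(X, \fa^c)$.

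The main obstacle is the independence of log resolution established in the second paragraph. In the honestly $\Q$-Gorenstein case this would be a routine consequence of the projection formula, but here the numerical pullback is only characterized through a uniqueness statement (Proposition \ref{prop:numinde}), and one must verify its compatibility with composition of birational maps. Once this compatibility is in hand, the remaining argument is a clean synthesis of the valuative characterization of Theorem \ref{t:mult-ideal} with the affineness of $A_X$ on faces of the dual complex afforded by the numerically $\Q$-Gorenstein hypothesis through Lemma \ref{lem:numqgor}.
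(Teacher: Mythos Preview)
Your proof is correct and follows essentially the same route as the paper: both hinge on the composition identity $K^{\num}_{X_\pi/X}=\rho^*K^{\num}_{X'/X}+K_{X_\pi/X'}$ (which the paper states directly, and you derive from Proposition~\ref{prop:numinde}), together with the affineness of $A_X$ on faces of $\D_\pi^N$ from Lemma~\ref{lem:numqgor} to reduce the valuative inequality to the vertices of a good resolution. The paper organizes the steps in the reverse order---first establishing $\J(X,\fa^c)=\pi_*\cO_{X_\pi}(\lceil K^{\num}_{X_\pi/X}-cD\rceil)$ for a good resolution $\pi$ factoring through $\mu$, then descending to $\mu$---and leaves the affineness argument implicit under ``arguing as in the proof of Theorem~\ref{t:mult-ideal}'', whereas you spell it out explicitly; but the substance is the same. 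One minor slip: when you write $\pi^*D$ you mean $\rho^*D$, since $D$ lives on $X'$.
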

\begin{proof} Let $N$ be a normalizing subscheme containing the zero locus of $\fa$, and pick a good resolution $\p$ of $N$ factoring as $\p=\mu\circ\r$. Using Lemma \ref{lem:numqgor} and arguing as in the proof of Theorem \ref{t:mult-ideal}, we easily get
$$
\J(X,\fa^c)=\p_*\cO_{X_\p}\left(\lceil K_{X_\p/X}^\num-c D\rceil\right). 
$$
On the other hand, we have
$$
K_{X_\p/X}^\num=\r^*K^\num_{X'/X}+K_{X_\p/X'},
$$ 
since both sides of the equality are $\p$-exceptional and $\p$-numerically equivalent to $K_{X_\p}$. Since $K_{X_\p/X'}$ is effective and $\p$-exceptional, we obtain as desired
$$
\p_*\cO_{X_\p}\left(\lceil K_{X_\p/X}^\num-c D\rceil\right)=\m_*\cO_{X'}\left(\lceil K_{X'/X}^\num-c D\rceil\right).
$$
\end{proof}

\begin{cor} Assume that $X$ is numerically $\Q$-Gorenstein. Then $X$ has log terminal singularities (in the usual sense, \ie with $K_X$ is $\Q$-Cartier) iff $A_X>0$ on $\DVal_X$.
\end{cor}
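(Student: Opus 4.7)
The direction $(\Rightarrow)$ is almost immediate. If $K_X$ is $\Q$-Cartier, the numerical pull-back $\mu^*_\num K_X$ coincides with the genuine pull-back $\mu^* K_X$, so $K^\num_{X'/X}$ is the ordinary relative canonical divisor $K_{X'/X}$. Consequently $A_X(\ord_E) = 1 + \ord_E(K_{X'/X})$ is the classical log discrepancy, and positivity on all divisorial valuations is the standard definition of log terminal.

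For $(\Leftarrow)$, the strategy is to feed the hypothesis $A_X > 0$ into Theorem \ref{thm:numgor} applied to the trivial ideal $\fa = \cO_X$. On a log resolution $\mu : X' \to X$ the Cartier divisor $D$ vanishes, and Theorem \ref{thm:numgor} yields
$$
\J(X,\cO_X) = \mu_*\cO_{X'}\bigl(\lceil K^\num_{X'/X}\rceil\bigr).
$$
Since $A_X(\ord_E) = 1 + \ord_E(K^\num_{X'/X})$ for every prime divisor $E \subset X'$, the assumption $A_X > 0$ on $\DVal_X$ is equivalent to $\lceil K^\num_{X'/X}\rceil \ge 0$; note that non-exceptional primes contribute trivially because $K^\num_{X'/X}$ is $\mu$-exceptional. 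Hence $\cO_{X'} \subset \cO_{X'}(\lceil K^\num_{X'/X}\rceil)$, and pushing forward gives $\J(X,\cO_X) = \cO_X$.

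To finish, I would chain two facts already recorded earlier in the paper: by the \cite{dFH09} recognition recalled just after Theorem \ref{t:2} in the introduction, $\J(X,\cO_X) = \cO_X$ implies the existence of an effective $\Q$-boundary $\D$ with $(X,\D)$ klt; and the underlying variety of any klt pair has rational singularities (the same fact the authors use immediately after Theorem \ref{t:3}). With $X$ having rational singularities and $K_X$ numerically $\Q$-Cartier, Theorem \ref{thm:ratsing} promotes $K_X$ to a genuine $\Q$-Cartier divisor. This drops us back into the $\Q$-Gorenstein setting, where the first paragraph identifies $A_X > 0$ on $\DVal_X$ with ordinary log terminality. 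The conceptual crux is the reduction to the $\Q$-Gorenstein case via the synthesis of Theorems \ref{thm:numgor} and \ref{thm:ratsing}; no additional obstacle is expected, as each remaining step is a direct invocation of a previously established result.
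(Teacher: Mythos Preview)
Your proof is correct and follows essentially the same route as the paper's: apply Theorem~\ref{thm:numgor} with $\fa=\cO_X$ to get $\cJ(X,\cO_X)=\cO_X$, invoke \cite{dFH09} to produce a klt boundary, use rational singularities plus Theorem~\ref{thm:ratsing} to make $K_X$ genuinely $\Q$-Cartier, and conclude. The only cosmetic differences are that you spell out the forward direction and the computation $\lceil K^\num_{X'/X}\rceil\ge 0$ more explicitly, and you close by returning to your first paragraph rather than using the monotonicity ``$(X,\D)$ klt $\Rightarrow$ $(X,0)$ klt'' as the paper does; one small imprecision is that $A_X>0$ on all of $\DVal_X$ is not literally \emph{equivalent} to $\lceil K^\num_{X'/X}\rceil\ge 0$ on a single resolution, but you only use the implication in the direction that holds, so no harm is done.
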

\begin{proof} By Theorem \ref{thm:numgor}, we have $A_X>0$ on $\DVal_X$ iff $\cJ(X,\cO_X)=\cO_X$, which is the case iff there exists an effective $\Q$-Weil divisor $\D$ such that the pair $(X,\D)$ is klt \cite{dFH09} (see also \cite{BdFF}). But this implies that $X$ has rational singularities, and Theorem \ref{thm:ratsing} thus shows that $K_X$ is $\Q$-Cartier. Since $(X,\D)$ is klt, so is $(X,0)$, which means that $X$ is log terminal in the classical sense.
\end{proof}

\begin{bibdiv}
\begin{biblist}%[\normalsize]

\bib{Ber}{book}{
   author={Berkovich, Vladimir G.},
   title={Spectral theory and analytic geometry over non-Archimedean fields},
   series={Mathematical Surveys and Monographs},
   volume={33},
   publisher={American Mathematical Society},
   place={Providence, RI},
   date={1990},
%   pages={x+169},
%   isbn={0-8218-1534-2},
%   review={\MR{1070709 (91k:32038)}},
}

\bib{BdFF}{article}{
   author={Boucksom, Sebastien},
   author={de Fernex, Tommaso},
   author={Favre, Charles},
   title={The volume of an isolated singularity},
   journal={Duke Math. J.},
   volume={161},
   date={2012}, 
   pages={1455--1520},
}

\bib{BFJ08}{article}{
   author={Boucksom, S{\'e}bastien},
   author={Favre, Charles},
   author={Jonsson, Mattias},
   title={Valuations and plurisubharmonic singularities},
   journal={Publ. Res. Inst. Math. Sci.},
   volume={44},
   date={2008},
   number={2},
   pages={449--494},
   %issn={0034-5318},
   %review={\MR{2426355 (2009g:32068)}},
   %doi={10.2977/prims/1210167334},
}

\bib{BFJ13}{article}{
   author={Boucksom, S{\'e}bastien},
   author={Favre, Charles},
   author={Jonsson, Mattias},
   title={A refinement of Izumi's Theorem},
   note={{\tt arXiv:1209.4104}},
   date={2012}
   %issn={0034-5318},
   %review={\MR{2426355 (2009g:32068)}},
   %doi={10.2977/prims/1210167334},
}

%\bib{Dem}{book}{
% author={Demailly, Jean-Pierre},
%  title={Complex analytic and differential geometry},
%   note={OpenContent book available at http://www-fourier.ujf-grenoble.fr/~demailly/manuscripts/agbook.pdf},
 %  date={2009},
%}

%\bib{ELM}{article}{
%   author={Ein, Lawrence},
%   author={Mustata, Mircea},
%   author={Lazarsfeld, Robert},
%   title={Contact loci in arc spaces},
%   journal={Compositio Math.},
%   volume={140},
%   date={2004},
%   number={},
%   pages={1229-1244-},
%   %issn={0034-5318},
%   %review={\MR{2426355 (2009g:32068)}},
%   %doi={10.2977/prims/1210167334},
%}

\bib{dFH09}{article}{
   author={de Fernex, Tommaso},
   author={Hacon, Christopher D.},
   title={Singularities on normal varieties},
   journal={Compos. Math.},
   volume={145},
   date={2009},
   number={2},
   pages={393--414},
   %issn={0010-437X},
   %review={\MR{2501423 (2010c:14013)}},
   %doi={10.1112/S0010437X09003996},
}

%\bib{Fa13}{article}{
%   author={Favre, Charles},
%   title={Countability properties of some Berkovich spaces},
%   note={{\tt arXiv:1103.6233}},
%   date={2011}
%   %issn={0034-5318},
%   %review={\MR{2426355 (2009g:32068)}},
%   %doi={10.2977/prims/1210167334},
%}

%\bib{FJ04}{book}{
%   author={Favre, Charles},
%   author={Jonsson, Mattias},
%   title={The valuative tree},
%   series={Lecture Notes in Mathematics},
%   volume={1853},
%   publisher={Springer-Verlag},
%   place={Berlin},
%   date={2004},
%   pages={xiv+234},
%%   isbn={3-540-22984-1},
%%   review={\MR{2097722 (2006a:13008)}},
%%   doi={10.1007/b100262},
%}

%\bib{FJ05a}{article}{
%   author={Favre, Charles},
%   author={Jonsson, Mattias},
%   title={Valuative analysis of planar plurisubharmonic functions},
%   journal={Invent. Math.},
%   volume={162},
%   date={2005},
%   number={2},
%   pages={271--311},
%%   issn={0020-9910},
%%   review={\MR{2199007 (2006k:32064)}},
%%   doi={10.1007/s00222-005-0443-2},
%}

\bib{FJ05b}{article}{
   author={Favre, Charles},
   author={Jonsson, Mattias},
   title={Valuations and multiplier ideals},
   journal={J. Amer. Math. Soc.},
   volume={18},
   date={2005},
   number={3},
   pages={655--684 (electronic)},
%   issn={0894-0347},
%   review={\MR{2138140 (2007b:14004)}},
%   doi={10.1090/S0894-0347-05-00481-9},
}

\bib{JM12}{article}{
   author={Jonsson, Mattias},
   author={Musta\c t\u a, Mircea},
   title={An algebraic approach to the openness conjecture of Demailly and Koll\'ar},
   note={{\tt arXiv:1205.4273}},
   date={2012}
}

\bib{JM10}{article}{
   author={Jonsson, Mattias},
   author={Musta\c t\u a, Mircea},
   title={Valuations and asymptotic invariants for sequences of ideals},
   note={{\tt arXiv:1011.3699}},
   date={2010},
}

\bib{Kaw88}{article}{
   author={Kawamata, Yujiro},
   title={Crepant blowing-up of 3-dimensional canonical singularities and its application to degenerations of surfaces},
   journal={Ann. of Math.},
   volume={127},
   date={1988},
   number={1},
   pages={93--163},
 }  
   
\bib{Kle66}{article}{
   author={ Kleiman, Steven L},
   title={Toward a numerical theory of ampleness},
   journal={Ann. of Math.},
   volume={84},
   date={1966},
   pages={293--344},
 }

\bib{Kol97}{article}{
   author={Koll{\'a}r, J{\'a}nos},
   title={Singularities of pairs},
   conference={
      title={Algebraic geometry---Santa Cruz 1995},
   },
   book={
      series={Proc. Sympos. Pure Math.},
      volume={62},
      publisher={Amer. Math. Soc.},
      place={Providence, RI},
   },
   date={1997},
   pages={221--287},
   %review={\MR{1492525 (99m:14033)}},
}

%\bib{Laz}{book}{
%   author={Lazarsfeld, Robert},
%   title={Positivity in algebraic geometry. II},
%   series={Ergebnisse der Mathematik und ihrer Grenzgebiete. 3. Folge. A
%   Series of Modern Surveys in Mathematics [Results in Mathematics and
%   Related Areas. 3rd Series. A Series of Modern Surveys in Mathematics]},
%   volume={49},
%   note={Positivity for vector bundles, and multiplier ideals},
%   publisher={Springer-Verlag},
%   place={Berlin},
%   date={2004},
%   %pages={xviii+385},
%   %isbn={3-540-22534-X},
%   %review={\MR{2095472 (2005k:14001b)}},
%}

%\bib{MN12}{article}{
%   author={Musta\c t\u a, Mircea},
%   author={Nicaise, Johannes}
%   title={Weight functions on non-archimedean analytic spaces and the Kontsevich-Soibelman skeleton },
%   note={{\tt arXiv:1212.6328}},
%   date={2012}
%   %issn={0034-5318},
%   %review={\MR{2426355 (2009g:32068)}},
%   %doi={10.2977/prims/1210167334},
%}

\bib{PS}{book}{
   author={Peters, Chris A. M},
   author={Steenbrink, Joseph H. M.},
   title={Mixed Hodge structures},
   series={Ergebnisse der Mathematik und ihrer Grenzgebiete. 3. Folge. A
   Series of Modern Surveys in Mathematics [Results in Mathematics and
   Related Areas. 3rd Series. A Series of Modern Surveys in Mathematics]},
   volume={52},
   publisher={Springer-Verlag},
   place={Berlin},
   date={2008},
}

\bib{Sam61}{article}{
   author={Samuel, Pierre},
   title={Sur les anneaux factoriels},
   journal={ Bull. Soc. Math. France},
   volume={89},
   date={1961},
   pages={155--173},
}

%\bib{Siu98}{article}{
%   author={Siu, Yum-Tong},
%   title={Invariance of plurigenera},
%   journal={Invent. Math.},
%   volume={134},
%   date={1998},
%   number={3},
%   pages={661--673},
%%   issn={0020-9910},
%%   review={\MR{1660941 (99i:32035)}},
%%   doi={10.1007/s002220050276},
%}

%\bib{Ste83}{article}{
%   author={ Steenbrink, Joseph},
 %  title={Mixed Hodge structures associated with isolated singularities},
 %  conference={
 %     title={Singularities, Part 2 (Arcata, Calif., 1981)},
%   },
% book={
%      series={Proc. Sympos. Pure Math.},
 %     volume={40},
 %     publisher={Amer. Math. Soc.},
 %     place={Providence, RI},
 %  },
 %  date={1983},
 %  pages={513--536},
%}
   
\bib{Thu}{article}{
   author={Thuillier, Amaury},
   title={G\'eom\'etrie toro\"\i dale et g\'eom\'etrie analytique non
   archim\'edienne. Application au type d'homotopie de certains sch\'emas
   formels},
   language={French, with English summary},
   journal={Manuscripta Math.},
   volume={123},
   date={2007},
   number={4},
   pages={381--451},
%   issn={0025-2611},
%   review={\MR{2320738 (2008g:14038)}},
%   doi={10.1007/s00229-007-0094-2},
}

\bib{Zh13}{article}{
   author={Zhang, Yuchen},
   title={On the volume of isolated singularities},
   note={{\tt arXiv:1301.6121}},
   date={2013}
}

\end{biblist}
\end{bibdiv}

%\vfill
\vspace{0.2cm}
\setlength{\parindent}{0in}
\def\scshape{}

\end{document}